\documentclass[11pt]{article}

\usepackage{colortbl,dcolumn,bezier,color,xcolor}
\usepackage{pstricks,pst-node,pst-text,pst-3d}
\usepackage{manfnt}
\usepackage{graphicx}
\usepackage{mathrsfs}
\usepackage{eso-pic}
\usepackage{subfigure}
\usepackage{xcolor}
\usepackage{chemarrow,chemarr,curves}
\usepackage{amsmath}
\usepackage{amsfonts}
\usepackage{amssymb}
\usepackage{multirow}
\usepackage{mathrsfs}
\usepackage{cite}
\usepackage{ascii}
\usepackage{bbding}
\usepackage{pifont}
\usepackage{ifsym}
\usepackage{wasysym}
\usepackage{upgreek}
\usepackage{algorithm}
\usepackage{nonfloat}

\renewcommand{\Box}{\framebox{\rule{0.3em}{0.0em}}}
\newtheorem{theorem}{Theorem}[section]
\newtheorem{lemma}{Lemma}[section]

\newtheorem{example}{Example}[section]
\newtheorem{remark}{Remark}[section]
\newtheorem{definition}{Definition}[section]

\newcommand{\bgeqn}{\begin{eqnarray}}
\newcommand{\edeqn}{\end{eqnarray}}

\newcommand{\bgeq}{\begin{eqnarray*}}
\newcommand{\edeq}{\end{eqnarray*}}
\newcommand{\bgc}{\begin{center}}
\newcommand{\edc}{\end{center}}



\renewcommand{\Box}{\hfill \rule{2.3mm}{2.3mm}}

\bibliographystyle{plain}

\setlength{\textwidth}{15.5cm} \setlength{\textheight}{22.5cm}
\setlength{\oddsidemargin}{0.5cm} \setlength{\evensidemargin}{0cm}
\setlength{\topmargin}{-2cm}

\makeatletter

\@addtoreset{equation}{section} \makeatother
\newenvironment{proof}{\noindent{\bf Proof. }}{\hfill $\Box$\medskip}

\newrgbcolor{LemonChiffon}{1. 0.98 0.8}
\newrgbcolor{LightBlue}{0.68 0.85 0.9}
\newrgbcolor{DarkBlue}{0.0 0.25 0.8}
\newrgbcolor{Red}{1.0 0.2 0.2}
\newrgbcolor{pink}{1.0 0.4 0.5}
\newrgbcolor{DarkGreen}{0.0 0.6 0.25}


\title{A novel approach for bilevel programs based on Wolfe duality\thanks{This work was supported in part by NSFC (Nos. 12071280, 11901380, 11971220).}
}

\author{Yuwei Li\thanks{\baselineskip 9pt
School of Management, Shanghai University, Shanghai 200444, China. E-mail: yuwei\_li@shu.edu.cn.}, \
   Gui-Hua Lin\thanks{\baselineskip 9pt
Corresponding author. School of Management, Shanghai University, Shanghai 200444, China. E-mail:
guihualin@shu.edu.cn.}, \
    Jin Zhang\thanks{\baselineskip 9pt
Department of Mathematics, SUSTech International Center for Mathematics, Southern University of Science and Technology, Shenzhen 518055, China. E-mail: zhangj9@sustech.edu.cn.}, \
    Xide Zhu\thanks{\baselineskip 9pt
School of Management, Shanghai University, Shanghai 200444, China. E-mail: xidezhu@shu.edu.cn.}
}

\date{January 26, 2022}

\begin{document}
\maketitle

\baselineskip 16pt \vspace{4pt} \noindent{\bf Abstract.} This paper considers a bilevel program, which has many applications in practice. To develop effective numerical algorithms, it is generally necessary to transform the bilevel program into a single-level optimization problem. The most popular approach is to replace the lower-level program by its KKT conditions and then the bilevel program can be reformulated as a mathematical program with equilibrium constraints (MPEC for short). However, since the MPEC does not satisfy the Mangasarian-Fromovitz constraint qualification at any feasible point, the well-developed nonlinear programming theory cannot be applied to MPECs directly. In this paper, we apply the Wolfe duality to show that, under very mild conditions, the bilevel program is equivalent to a new single-level reformulation (WDP for short) in the globally and locally optimal sense. We give an example to show that, unlike the MPEC reformulation, WDP may satisfy the Mangasarian-Fromovitz constraint qualification at its feasible points. We give some properties of the WDP reformulation and the relations between the WDP and MPEC reformulations. We further propose a relaxation method for solving WDP and investigate its limiting behavior. Comprehensive numerical experiments indicate that, although solving WDP directly does not perform very well in our tests, the relaxation method based on the WDP reformulation is quite efficient.

\vspace{4pt}\noindent{\bf Keywords.} Nonlinear programming, Bilevel programming, Wolfe duality, Constraint qualification, Relaxation method, Convergence.

\vspace{4pt}\noindent{\bf 2010 Mathematics Subject Classification.} 90C30, 90C33, 90C46.

\medskip

\baselineskip 16pt
\parskip 4pt

\section{Introduction}\label{intro}

Bilevel programs are hierarchical optimization problems which contain some parameterized optimization or equilibrium problems as constraints. This type of leader-follower game problems can be traced back to the model first introduced by Stackelberg \cite{stk1934}
and they have been applied to many fields such as game theory, industrial engineering, economic planning, management science, and so on \cite{Bennett,Bard1998practical,
Franceschi2018bilevel,Liu2020generic}.
For more details about their developments, we refer the readers to \cite{Dempe2013bilevel,
Bard1998practical,Colson2007overview}
and the references therein.

In this paper, we consider the optimistic bilevel program
\begin{eqnarray*}
(\mathrm{BP})\quad \min &&F(x, y)\\
\mbox{s.t.}&&x\in X,~y \in {S}(x).
\end{eqnarray*}
Here, $F : \mathbb{R}^{n+m}\rightarrow\mathbb{R}$ denotes the leader's objective function, $X\subseteq \mathbb{R}^n$ denotes the leader's strategy set, and ${S}(x) \subseteq \mathbb{R}^m$ denotes the follower's optimal reaction set, which is  the optimal solution set of the parameterized program
\begin{eqnarray*}
(\mathrm{P}_x)\quad \min\limits_{y}&&f(x, y)\\
\mbox{s.t.}&&g(x, y) \le 0, ~h(x, y)=0,
\end{eqnarray*}
where $f: \mathbb{R}^{n+m} \rightarrow\mathbb{R}$, $g$: $\mathbb{R}^{n+m} \rightarrow\mathbb{R}^{p}$, and $h$: $\mathbb{R}^{n+m} \rightarrow\mathbb{R}^{q}$. Throughout the paper, we denote by $Y(x) := \{y \in \mathbb{R}^{m} : g(x, y) \le 0, ~h(x, y) = 0\}$ the feasible region of $({\rm P}_x)$ and assume that $F$ is continuously differentiable, $\{f, g, h\}$ are all twice continuously differentiable, and ${S}(x) \neq \emptyset$ for each $x\in X$. (BP) describes the case where the leader can anticipate how the follower makes its decisions. In practice, there are other cases where the leader and the follower are not cooperative and, in these cases, the leader may make decisions in the pessimistic manner.
See \cite{
Bennett,Dempe2013bilevel,
Loridan1996weak,
Bard1998practical,Lampariello2020numerically} for more details about optimistic and pessimistic bilevel models.

As is well-known to us, due to the hierarchical structure, bilevel programs are very difficult to solve.
Actually, even for linear cases, bilevel programs have been shown to be strongly NP-hard. From the perspective of continuous optimization, to develop effective numerical algorithms, it is generally necessary to transform bilevel programs into single-level optimization problems. Along this way, there have been presented several reformulations for bilevel programs, which include
\begin{itemize}
\item[$\diamond$] the reformulation based on lower-level solution functions,
\item[$\diamond$] the reformulation based on lower-level optimal value functions,
\item[$\diamond$] the reformulation based on lower-level optimality conditions,
\item[$\diamond$] the reformulation based on lower-level Lagrangian duality.
\end{itemize}
See Section 2 given below for details about these reformulations.

Among the reformulations for bilevel programs mentioned above, the first one requires the lower-level solution functions to be known and, in such cases, it is generally neither differentiable nor convex even if both the upper-level and lower-level programs are differentiable and convex. For the second reformulation, the lower-level optimal value functions are usually difficult to evaluate because they do not have analytic expressions in general. The third reformulation is the most popular in  literature and it is known as the mathematical program with equilibrium constraints (MPEC). However, MPECs are highly nonconvex with combinatorial structure and, in theory, MPECs do not satisfy the Mangasarian-Fromovitz constraint qualification (MFCQ) at any feasible point so that the well-developed nonlinear programming theory cannot be applied to MPECs directly.
The fourth reformulation uses the so-called regularized constrained Lagrangian dual function,
but it has some flaws as follows:
\begin{itemize}
\item The regularized constrained Lagrangian dual function used in the duality-based reformulation is defined by minimizing the lower-level Lagrangian function over a compact and convex constraint set and hence it generally has no analytic expressions.

\item The lower-level feasible region is assumed to be uniformly contained within the interior of some compact and convex set, which is too stringent. 

\item If the lower-level objective function is convex but not strictly convex, an approximation parameter is introduced and required to tend to 0 in solving the original bilevel program. 
\end{itemize}

In this paper, we present a new single-level reformulation for (BP) with better structure and properties than the existing ones. Our strategy is to use the Wolfe duality for constrained optimization problems to deal with the lower-level program. Compared with the existing reformulations introduced above, the new reformulation based on lower-level Wolfe duality does not involve any function without analytic expression and, more importantly, it may satisfy the MFCQ at its feasible points.

The paper is organized as follows. In Section 2, we first recall the existing single-level reformulations for (BP) and then introduce some Wolfe duality theorems for constrained optimization problems. In Section 3, we apply the Wolfe duality theorems to present our new single-level reformulation for (BP). We show their equivalence from both globally and locally optimal senses. In Section 4, we make a comparison between the new reformulation and the most popular MPEC reformulation. In particular, we first give an example to show that the new reformulation may satisfy the MFCQ at its feasible points and then investigate the relations between the two reformulations. In Section 5, we present a relaxation scheme for solving the new reformulation and establish a convergence analysis. In Section 6, we report our numerical experiments on a number of linear bilevel programs generated randomly. In Section 7, we make some concluding remarks.

Throughout, for a given differentiable function $G: \mathbb{R}^{n}\rightarrow\mathbb{R}^{m}$, we use $\nabla G(x)\in\mathbb{R}^{n\times m}$ to denote the transposed Jacobian of $G$ at $x$. When $m=1$, $\nabla G(x)\in \mathbb{R}^{n}$ stands for the gradient of $G$ at $x$. Given two vectors $a,b\in \mathbb{R}^{n}$, $a\bot b$ means $a^Tb=0$.

\section{Preliminaries}

In this section, we introduce some existing reformulations for bilevel programs and the Wolfe duality for constrained optimization problems.

\subsection{Existing reformulations for bilevel programs}\label{approaches to BP}

As mentioned before, in order to develop effective numerical algorithms, we need to transform bilevel programs into single-level optimization problems. In this section, we briefly recall the existing reformulations for bilevel programs.

\subsubsection{Reformulation based on lower-level solution functions}

If the lower-level program $({\rm P}_x)$ has a unique optimal solution $y(x)$ for any given $x\in X$,  the bilevel program (BP) can be reformulated equivalently into the single-level problem
\begin{eqnarray*}
(\mathrm{SP})\quad \min &&F(x, y(x)) \\
\mbox{\rm s.t.}&&x\in X.
\end{eqnarray*}

Naturally, this reformulation requires stringent conditions to ensure the existence of lower-level optimal solutions and, under the condition that the solution function $y(x)$ is known, the objective function in the above problem is generally neither differentiable nor convex, even if both the upper-level and lower-level programs are differentiable and convex. Therefore, this approach is usually hard to be applied in solving bilevel programs.

\subsubsection{Reformulation based on lower-level optimal value functions}

Another approach for bilevel programs is to use the lower-level optimal value function defined by
$V(x):=\min\limits_{y \in Y(x)}~f(x, y).$
Then, the bilevel program (BP) can be reformulated equivalently into the single-level optimization problem
\begin{eqnarray*}
(\mathrm{VP})\quad \min &&F(x, y)\\
\mbox{s.t.}&&f(x, y) - V(x) \leq 0, ~x\in X,~y\in Y(x).
\end{eqnarray*}
This reformulation was first proposed by Outrata in \cite{Outrata1990numerical}. Subsequently, Ye et al. \cite{Ye1995optimality,Ye2010new} and Dempe et al. \cite{
Dempe2013bilevel}
used the lower-level optimal value functions to study various optimality conditions and constraint qualifications
for bilevel programs.

On the other hand, although the reformulation (VP) looks like a single-level optimization problem, as the lower-level optimal value function $V(x)$ does not have analytic expression and may not be differentiable in general, it is obviously not a normal constrained optimization problem. Moreover, it is easy to see that the inequality constraint in (VP) is actually an equality constraint and hence the nonsmooth MFCQ is never satisfied at each feasible point \cite[Proposition 3.2]{Ye1995optimality}. In consequence, we cannot apply the well-developed optimization algorithms to solve it directly. Recently, motivated by the study of semi-infinite programming, Xu et al. \cite{Lin2014solving, Xu2014smoothing} considered a simple bilevel program with a compact lower-level feasible region and proposed some approximation methods by using an integral entropy function to approximate the lower-level optimal value function.

\subsubsection{Reformulation based on lower-level optimality conditions}

Because of the existence of the lower-level optimal solution or optimal value functions, both reformulations introduced above are hard to be applied in solving bilevel programs. The most popular approach so far is to use the lower-level optimality conditions to transform the bilevel program (BP) into an MPEC. If the lower-level feasible region $Y(x)$ is convex for each $x\in X$, by means of the classical first-order necessary conditions for $({\rm P}_x)$, the bilevel program (BP) can be reformulated as a mathematical program with parametric variational inequality constraints, which is essentially a semi-infinite programming problem and mainly useful in the theoretical study. To develop numerical algorithms, we may use the KKT conditions for $({\rm P}_x)$, which usually requires some constraint qualifications, to reformulate the bilevel program (BP) as the following mathematical program with complementarity constraints:
\begin{eqnarray*}
(\mathrm{MPEC})\quad\min  && F(x,y) \\
\mbox{s.t.\!\! } && x \in X,~h(x, y)=0,\\
&&0\leq u ~ \bot ~g(x, y)\le 0,\\
&&\nabla_y f(x, y) + \nabla_y g(x, y) u+ \nabla_y h(x, y)v=0.
\end{eqnarray*}

On the one hand, although the globally optimal solutions of the original bilevel program (BP) and the corresponding MPEC coincide with each other, their local optimal solutions may not be correspond with each other in the case that multiple multipliers exist for the lower-level program \cite{dempe2012bilevel}. In addition, Mirrlees gave an example to reveal that the optimal solutions of the original bilevel program may not even be a stationary point of the corresponding MPEC \cite{Mirrlees1999theory}.

On the other hand, MPEC has a combinatorial structure, that is, its feasible region is usually a union of lots of pieces, and it fails to satisfy the MFCQ at any feasible point, which means that the well-developed optimization algorithms in nonlinear programming may not be stable in solving MPEC. During the past decades, there have been proposed a number of approximation algorithms for solving MPEC, including relaxation and smoothing algorithms, penalty function algorithms, interior point algorithms, implicit programming algorithms, active-set identification algorithms, constrained equation algorithms, nonsmooth algorithms, etc. We refer the readers to
\cite{Fuku-1,Solodov-1,Scholtes2001,
Fukushima2004smoothing,Hoheisel2013theoretical,LinEquiation,LinSMPEC,LinHybrid,Luo1996mathematical,Pang1999complementarity,Scheel2000mathematical} and the references therein for more details about the recent developments on theory and numerical algorithms for MPEC.

\subsubsection{Reformulation based on lower-level Lagrangian duality}

Recently, Ouattara and Aswani \cite{Ouattara2018duality} studied another approach to deal with bilevel programs by using some kind of Lagrangian dual functions of the lower-level program {($\mathrm{P}_{x}$)}. 
Since the classical Lagrangian dual function of {($\mathrm{P}_{x}$)} may not be differentiable and even not real-valued, under the condition that the lower-level feasible region is convex and uniformly contained within the interior of some compact and convex set $\bar{Y}$, the authors defined a so-called constrained Lagrangian dual function by
\begin{eqnarray*}
\mathcal{V}(x,u,v):=\min_{y\in \bar{Y}} \, f(x,y)+u^T g(x,y)+v^T h(x,y)
\end{eqnarray*}
and showed that, if $f(x,y)$ is strictly convex in $y$ for each $x\in X$, the constrained Lagrangian dual function is differentiable when $u\ge 0$. In this case, they presented the following duality-based reformulation:
\begin{eqnarray*}
(\mathrm{LDP})\quad \min  && F(x,y)  \nonumber\\
\mbox{ s.t.\!\! } && f(x,y)\le\mathcal{V}(x,u,v),\label{RDF+}\\
&&u\geq0,~g(x,y)\le0,\nonumber\\
&&x\in X,~h(x,y)=0.\nonumber
\end{eqnarray*}
If $f(x,y)$ is only convex but not necessarily strictly convex in $y$ for each $x\in X$, they introduced a regularized constrained Lagrangian dual function
\begin{eqnarray*}
\mathcal{V}_\delta(x,u,v):=\min_{y\in \bar{Y}} \, \delta \|y\|^2+f(x,y)+u^T g(x,y)+v^T h(x,y),
\end{eqnarray*}
where $\delta>0$, and gave the following approximated duality-based reformulation:
\begin{eqnarray}
\min && F(x,y)  \nonumber\\
\mbox{s.t.\!\! } && f(x,y)\le\mathcal{V}_\delta(x,u,v),\label{RDF}\\
&&u\geq0,~g(x,y)\le0,\nonumber\\
&&x\in X,~h(x,y)=0.\nonumber
\end{eqnarray}
Note that both $\mathcal{V}$ and $\mathcal{V}_\delta$ do not have analytic expressions in general and, in addition, the approximation parameter $\delta$ in \eqref{RDF} is required to tend to $0$ decreasingly in solving the original bilevel program.

\subsection{Wolfe duality for constrained optimization problems}
\label{Wolfe-duality}

This subsection is devoted to gathering some useful results related to the Wolfe duality for constrained optimization problems, which play a crucial role in our proposed approach to bilevel programs. For simplicity, in this subsection, we omit the upper-level variable $x$ and rewrite the lower-level program ($\mathrm{P}_{x}$) as
\begin{eqnarray*}
(\mathrm{P})\quad \min && f(y)\\
\mbox{s.t.}&& g(y) \le 0, ~h(y) = 0,
\end{eqnarray*}
where $f: \mathbb{R}^{m}\rightarrow\mathbb{R}$, $g$: $\mathbb{R}^{m}\rightarrow\mathbb{R}^{p}$, and $h$: $\mathbb{R}^{m}\rightarrow\mathbb{R}^{q}$. Denote by $Y$ and ${S}$ the feasible region and solution set of (P) respectively.
The Wolfe dual problem given in \cite{Wolfe1961duality} for (P) is
\begin{eqnarray*}
(\mathrm{WD})\quad\max &&L(z, u, v)\\
\mbox{s.t.}&& \nabla_z  L(z, u, v)=0,~u\geq 0,
\end{eqnarray*}
where $L: \mathbb{R}^{m+p+q} \to \mathbb{R}$ represents the Lagrangian function
\begin{eqnarray}\label{Lagrangian}
L(z, u, v) := f(z) + u^{T} g(z) + v^{T} h(z).
\end{eqnarray}
We denote by $W$ the feasible region of {(\rm WD)} and, for any fixed $(u,v)$, we let $Z(u,v):=\{z\in \mathbb{R}^m: \nabla_z L(z, u, v)=0\}$.
Note that, since the Lagrangian function $L$ may not be convex over $W$ due to the existence of the terms $u^{T} g$ and $v^{T} h$, the Wolfe dual problem (WD) is usually a nonconvex optimization problem, even though the primal problem (P) is convex.

\begin{definition}\rm{}\label{pseudoconvex}
We say $f$ to be pseudoconvex on $\mathcal{Y}\subseteq\mathbb{R}^m$ if $(y_1-y_2)^T \nabla f(y_2) \geq 0$ implies $f(y_1) \geq f(y_2)$ for any $y_1, y_2\in \mathcal{Y}$.
\end{definition}

From Theorems 3 and 4 in \cite{Hanson1982further}, we have the following duality theorems.

\begin{theorem}\label{Weak duality0}\emph{\textbf{(weak Wolfe duality)}}
Suppose that $L(\cdot, u, v)$ is pseudoconvex on $Y\cup Z(u,v)$ for any $u\in \mathbb{R}^p_+$ and $v\in \mathbb{R}^q$. Then,
the optimal value of the primal problem {\rm (P)} is no less than its Wolfe dual problem {\rm (WD)}, i.e.,
\begin{eqnarray*}
\min\limits_{y\in Y} f( y)  \geq \max\limits_{(z, u, v)\in {W}} L( z, u, v).
\end{eqnarray*}
\end{theorem}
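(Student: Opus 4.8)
The plan is to establish the weak duality inequality by a direct pointwise argument: take an arbitrary feasible point $(z,u,v)\in W$ of the dual and an arbitrary feasible point $y\in Y$ of the primal, and show that $f(y)\ge L(z,u,v)$. Once this pointwise inequality is proved, taking the infimum over $y\in Y$ on the left and the supremum over $(z,u,v)\in W$ on the right yields the claim.

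First I would fix $(z,u,v)\in W$, so that $\nabla_z L(z,u,v)=0$ and $u\ge 0$, and fix $y\in Y$, so that $g(y)\le 0$ and $h(y)=0$. The hypothesis says $L(\cdot,u,v)$ is pseudoconvex on $Y\cup Z(u,v)$; since $z\in Z(u,v)$ and $y\in Y$, both points lie in this set. Apply the definition of pseudoconvexity (Definition \ref{pseudoconvex}) to the function $L(\cdot,u,v)$ with $y_2=z$ and $y_1=y$: because $\nabla_z L(z,u,v)=0$, we trivially have $(y-z)^T\nabla_z L(z,u,v)=0\ge 0$, and hence pseudoconvexity forces $L(y,u,v)\ge L(z,u,v)$.

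It then remains to compare $L(y,u,v)$ with $f(y)$. Expanding the Lagrangian \eqref{Lagrangian} at $y$ gives $L(y,u,v)=f(y)+u^Tg(y)+v^Th(y)$. Since $y\in Y$ we have $h(y)=0$, so the last term vanishes; since $u\ge 0$ and $g(y)\le 0$, we get $u^Tg(y)\le 0$. Therefore $L(y,u,v)\le f(y)$. Chaining this with the inequality from the previous step yields $f(y)\ge L(y,u,v)\ge L(z,u,v)$, which is the desired pointwise bound. Taking $\min$ over $y\in Y$ and $\max$ over $(z,u,v)\in W$ completes the proof.

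There is essentially no serious obstacle here; the only point requiring a little care is making sure the two test points used in the pseudoconvexity argument genuinely lie in the set $Y\cup Z(u,v)$ on which pseudoconvexity is assumed — this is why the hypothesis is stated over exactly that union rather than over $Y$ alone — and noting that the gradient condition $\nabla_z L(z,u,v)=0$ makes the hypothesis of the pseudoconvexity implication hold for free regardless of the sign of $(y-z)^T\nabla_z L(z,u,v)$. Everything else is a one-line sign computation using $u\ge 0$, $g(y)\le 0$, and $h(y)=0$.
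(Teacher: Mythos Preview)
Your proof is correct and follows essentially the same route as the paper: fix arbitrary $y\in Y$ and $(z,u,v)\in W$, use $\nabla_z L(z,u,v)=0$ together with pseudoconvexity of $L(\cdot,u,v)$ on $Y\cup Z(u,v)$ to get $L(y,u,v)\ge L(z,u,v)$, then use $u\ge 0$, $g(y)\le 0$, $h(y)=0$ to get $f(y)\ge L(y,u,v)$, and conclude by arbitrariness. Your explicit remark that both $y$ and $z$ lie in $Y\cup Z(u,v)$ is a welcome clarification that the paper leaves implicit.
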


\begin{proof}
For any $y\in Y$ and $(z,u,v)\in W$, we have
$\nabla_z L(z, u, v)=0$ and hence $L(y, u, v)\geq L(z, u, v)$ by the pseudoconvexity of $L(\cdot, u, v)$.
Since $u\in \mathbb{R}^p_+$ and $y\in Y$ satisfies $h(y)=0$ and $g(y)\leq 0$, we have
\begin{eqnarray*}
 f(y)\geq f(y) + u^{T} g(y) + v^{T} h(y)= L(y, u, v)\geq L(z, u, v).
\end{eqnarray*}
By the arbitrariness of $y\in Y$ and $(z,u,v)\in W$, we get the conclusion.
\end{proof}

\begin{theorem}\label{Strong duality0}\emph{\textbf{(strong Wolfe duality)}}
Suppose that  $L(\cdot, u, v)$ is pseudoconvex on $Y\cup Z(u,v)$ for any $u\in \mathbb{R}^p_+$, $v\in \mathbb{R}^q$
and {\rm (P)} satisfies the Guignard constraint qualification at some solution $\bar{y} \in {S}$. Then, there exists an optimal solution $(\bar{z},\bar{u},\bar{v})\in{W}$ to {\rm (WD)} such that
\begin{eqnarray*}
\min\limits_{y\in Y} f(y) = f(\bar{y}) = L(\bar{z}, \bar{u}, \bar{v}) = \max\limits_{(z, u, v)\in {W}} L(z, u, v).
\end{eqnarray*}
\end{theorem}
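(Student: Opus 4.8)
The plan is to construct the dual optimal triple explicitly, taking $\bar z=\bar y$ together with a pair of Karush--Kuhn--Tucker multipliers furnished by the Guignard constraint qualification, and then to invoke the already-proved weak duality (Theorem~\ref{Weak duality0}) to certify its optimality. First I would apply the constraint qualification at the solution $\bar y\in S$: since $\bar y$ is a global, hence local, minimizer of $f$ over $Y$ and the Guignard CQ is precisely the condition that makes the KKT conditions necessary, there exist multipliers $\bar u\in\mathbb{R}^p_+$ and $\bar v\in\mathbb{R}^q$ with $\nabla f(\bar y)+\nabla g(\bar y)\bar u+\nabla h(\bar y)\bar v=0$ and $\bar u^T g(\bar y)=0$, while $g(\bar y)\le 0$ and $h(\bar y)=0$ hold simply because $\bar y\in Y$.

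Next I would verify that $(\bar z,\bar u,\bar v):=(\bar y,\bar u,\bar v)$ belongs to $W$: by the definition \eqref{Lagrangian} of $L$, the stationarity equation above is exactly $\nabla_z L(\bar y,\bar u,\bar v)=0$, and $\bar u\ge 0$, so this triple is feasible for (WD). Evaluating the dual objective there and using complementary slackness together with $h(\bar y)=0$ yields
\begin{eqnarray*}
L(\bar y,\bar u,\bar v)=f(\bar y)+\bar u^T g(\bar y)+\bar v^T h(\bar y)=f(\bar y).
\end{eqnarray*}
By Theorem~\ref{Weak duality0}, $L(z,u,v)\le \min_{y\in Y}f(y)$ for every $(z,u,v)\in W$; since $\bar y$ solves (P) this minimum equals $f(\bar y)=L(\bar y,\bar u,\bar v)$. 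Thus $f(\bar y)$ is both an upper bound for the dual objective over $W$ and the value attained at the feasible point $(\bar y,\bar u,\bar v)$, so that point is optimal for (WD) and the full chain of equalities in the statement follows.

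The only step that requires genuine care is the first one, namely passing from ``$\bar y$ solves (P)'' plus ``Guignard CQ at $\bar y$'' to honest KKT multipliers $(\bar u,\bar v)$; this is the classical fact that Guignard CQ is the polarity condition guaranteeing that $-\nabla f(\bar y)$ lies in the cone spanned by the equality-constraint gradients and the active inequality-constraint gradients, and I would cite it from a standard nonlinear programming reference rather than reprove it. Everything afterward is a one-line feasibility check plus a one-line appeal to weak duality; in particular, no further use of the pseudoconvexity hypothesis is needed here beyond what is already absorbed in Theorem~\ref{Weak duality0}. A useful byproduct of this construction is that the dual optimal solution can be taken with $\bar z=\bar y$, which is the form we will exploit when building the single-level reformulation of (BP).
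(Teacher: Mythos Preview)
Your proof is correct and follows essentially the same route as the paper: both obtain KKT multipliers $(\bar u,\bar v)$ from the Guignard CQ at $\bar y$, set $\bar z=\bar y$ so that $(\bar y,\bar u,\bar v)\in W$ with $L(\bar y,\bar u,\bar v)=f(\bar y)$, and then invoke Theorem~\ref{Weak duality0} to certify dual optimality. The only cosmetic difference is that the paper phrases the last step as a short contradiction argument, whereas you apply weak duality directly.
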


\begin{proof}
Since (P) satisfies the Guignard constraint qualification at $\bar{y} \in {S}$,
there exists $(\bar{u},\bar{v})$ such that
\begin{eqnarray*}
\nabla_y L(\bar{y},\bar{u},\bar{v})=0,~~ \bar{u}^T g(\bar{y})=0, ~~\bar{u}\geq 0,
\end{eqnarray*}
from which we have $(\bar{y},\bar{u},\bar{v})\in W$. We can show that $(\bar{y},\bar{u},\bar{v})$ is an optimal solution to $(\mathrm{WD})$. Otherwise,
there must exist some $(y,u,v)\in W$ such that
\begin{eqnarray*}
L(y,u,v)>L(\bar{y},\bar{u},\bar{v})= f(\bar{y}),
\end{eqnarray*}
which contradicts the weak Wolfe duality shown in Theorem \ref{Weak duality0}.
Therefore, $(\bar{y},\bar{u},\bar{v})$ is an optimal solution to $(\mathrm{WD})$.
As a result, we have
\begin{eqnarray*}
\min\limits_{y\in Y} f(y) = f(\bar{y}) = L(\bar{y}, \bar{u}, \bar{v})
= \max\limits_{(z, u, v)\in {W}} L(z, u, v).
\end{eqnarray*}
This completes the proof.
\end{proof}

\begin{remark}\label{weaker convexity}\rm
Note that the pseudoconvexity assumption for the Lagrangian function in Theorems \ref{Weak duality0} and \ref{Strong duality0} can be replaced by some other weaker conditions such as the $\mathcal F$-pseudoconvexity or the invexity. Since these conditions are not easy to understand by definitions, we use the well-known pseudoconvexity to describe our results. Actually, most results can be weakened by replacing the pseudoconvexity assumption with these weaker conditions. Note that
    \begin{center}
Pseudoconvexity ~$\Rightarrow $~ Invexity $\Leftrightarrow$ $\mathcal{F}$-convexity
$\Rightarrow$ $\mathcal{F}$-pseudoconvexity,
\end{center}
where the first implication follows from \cite{Ben-Israel1986what}, the equivalence follows from \cite{Osuna1998invex}, and the last implication follows from \cite{Hanson1982further}. That is, the $\mathcal F$-pseudoconvexity is the weakest condition mentioned above.
We refer the readers to \cite{Hanson1982further,Osuna1998invex,Weir1988pre,Wolfe1961duality} for details about these conditions.
\end{remark}

\begin{remark}\rm
The results under the $\mathcal F$-pseudoconvexity assumption were shown in \cite{Hanson1982further}, where the $\mathcal F$-pseudoconvexity was assumed over the feasible region $Y$ only, rather than the set $Y\cup Z(u,v)$ assumed in Theorems \ref{Weak duality0} and \ref{Strong duality0}. However, it can be seen from their proofs that the $\mathcal F$-pseudoconvexity only over the feasible region  is not sufficient to derive their conclusions. This is why we retain the proofs for these theorems.
\end{remark}

\begin{remark}\rm
It should be noted that, in Theorem \ref{Strong duality0}, the dual optimal solution $\bar{z}$ may not solve the primal problem {\rm(P)}, unless there are some restrictions on the functions involved. See, e.g., the following converse duality theorem given in \cite{Craven1971converse,Kanniappan1984duality}.
\end{remark}

\begin{theorem}\label{Converse duality1}
Suppose that {\rm (P)} is a convex program, 
$\bar{y}$ solves {\rm(P)}, and $(\bar{z}, \bar{u}, \bar{v})$ solves {\rm(WD)}.
Then, we have $\bar{z} = \bar{y}$ and $f(\bar{y}) = L(\bar{z}, \bar{u}, \bar{v})$ if one of the following conditions holds:
\begin{itemize}
  \item[{\rm (i)}] The Hessian matrix $\nabla_{zz}^2 {L}(\bar{z}, \bar{u}, \bar{v})$ is nonsingular.

  \item[{\rm (ii)}] The Guignard constraint qualification holds at $\bar{y}$ and the function $f$ is strictly convex near $\bar{z}$.
\end{itemize}
\end{theorem}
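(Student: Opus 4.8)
The plan is to treat cases (i) and (ii) separately while relying in both on one elementary observation: since (P) is convex (so $f$ and the components of $g$ are convex and $h$ is affine) and $(\bar z,\bar u,\bar v)\in W$ forces $\bar u\ge 0$, the function $L(\cdot,\bar u,\bar v)=f+\bar u^Tg+\bar v^Th$ is convex on $\mathbb{R}^m$; moreover $\nabla_z L(\bar z,\bar u,\bar v)=0$, so $\bar z$ is automatically a global minimizer of $L(\cdot,\bar u,\bar v)$. I will also use repeatedly that for any $y$ feasible for (P) one has $L(y,\bar u,\bar v)=f(y)+\bar u^Tg(y)+\bar v^Th(y)\le f(y)$, in particular $L(\bar y,\bar u,\bar v)\le f(\bar y)$.

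For case (i), I would first linearize the nonstandard equality constraint of (WD). Since $\nabla_{zz}^2 L(\bar z,\bar u,\bar v)$ is nonsingular, the implicit function theorem applied to $\nabla_z L(z,u,v)=0$ yields a $C^1$ map $z=z(u,v)$ near $(\bar u,\bar v)$ with $z(\bar u,\bar v)=\bar z$, so in a neighborhood of $(\bar z,\bar u,\bar v)$ the problem (WD) is equivalent, via the diffeomorphism $(u,v)\mapsto(z(u,v),u,v)$, to maximizing $\phi(u,v):=L(z(u,v),u,v)$ over the polyhedral set $\{u\ge 0\}$. Because $\nabla_z L$ vanishes identically along $z=z(u,v)$, the chain rule gives $\nabla_u\phi=g(z(u,v))$ and $\nabla_v\phi=h(z(u,v))$. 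Since $(\bar u,\bar v)$ is a local maximizer of $\phi$ over a set cut out by linear constraints, its first-order optimality conditions hold with no further constraint qualification and read $\nabla_v\phi(\bar u,\bar v)=0$, $\nabla_u\phi(\bar u,\bar v)\le 0$, and $\bar u_i(\nabla_u\phi(\bar u,\bar v))_i=0$ for all $i$; that is, $h(\bar z)=0$, $g(\bar z)\le 0$, and $\bar u^Tg(\bar z)=0$. Together with $\nabla_z L(\bar z,\bar u,\bar v)=0$ and $\bar u\ge 0$ these are exactly the KKT conditions of (P) at $\bar z$, so by convexity of (P) the point $\bar z$ solves (P); hence $f(\bar z)=f(\bar y)$ and $L(\bar z,\bar u,\bar v)=f(\bar z)+\bar u^Tg(\bar z)+\bar v^Th(\bar z)=f(\bar z)=f(\bar y)$. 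Finally, a nonsingular positive semidefinite matrix is positive definite, so $L(\cdot,\bar u,\bar v)$ is strictly convex near $\bar z$; together with convexity on all of $\mathbb{R}^m$ this makes $\bar z$ the unique global minimizer of $L(\cdot,\bar u,\bar v)$. Since $L(\bar y,\bar u,\bar v)\le f(\bar y)=L(\bar z,\bar u,\bar v)$, the point $\bar y$ is also a global minimizer, whence $\bar z=\bar y$.

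For case (ii), I would start from strong duality instead. The Guignard constraint qualification at $\bar y\in S$ makes Theorem \ref{Strong duality0} applicable --- its pseudoconvexity hypothesis is met because $L(\cdot,u,v)$ is convex for every $u\ge 0$ --- so $\max_{(z,u,v)\in W}L=f(\bar y)$, and since $(\bar z,\bar u,\bar v)$ solves (WD) we get $L(\bar z,\bar u,\bar v)=f(\bar y)$, the value part of the claim. Next, $\bar z$ is a global minimizer of the convex function $L(\cdot,\bar u,\bar v)$, and $L(\bar y,\bar u,\bar v)\le f(\bar y)=L(\bar z,\bar u,\bar v)$ shows $\bar y$ is one too, so the segment $[\bar y,\bar z]$ lies in the (convex) minimizer set and $L(\cdot,\bar u,\bar v)\equiv f(\bar y)$ along it. On that segment $f(y)=f(\bar y)-\bar u^Tg(y)-\bar v^Th(y)$, whose right-hand side is concave in $y$ (here $-\bar u^Tg$ is concave since $\bar u\ge 0$ and $g$ is convex, and $-\bar v^Th$ is affine). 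If $\bar y\ne\bar z$, then in a neighborhood of $\bar z$ the segment is nondegenerate and $f$ restricted to it is simultaneously strictly convex (by hypothesis) and concave, which is impossible. Hence $\bar z=\bar y$.

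The step I expect to be the main obstacle is the one in case (i) that justifies availability of first-order optimality conditions for (WD) at $(\bar z,\bar u,\bar v)$: the dual problem carries the atypical equality constraint $\nabla_z L=0$, and one must either reduce it as above (via the implicit function theorem, which conveniently removes the need for a constraint qualification on (WD)) or check directly that nonsingularity of $\nabla_{zz}^2 L(\bar z,\bar u,\bar v)$ forces (WD) to satisfy MFCQ at $(\bar z,\bar u,\bar v)$. Either way some care is needed in tracking signs in order to recover exactly primal feasibility and complementary slackness for $\bar z$. The remaining pieces --- recognizing the primal KKT system and the ``strictly convex versus concave on a nondegenerate segment'' contradiction in case (ii) --- are routine once these are in place.
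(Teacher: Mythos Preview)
The paper does not actually prove this theorem: it states the result with a citation to Craven--Mond and Kanniappan and moves on, so there is no in-paper argument to compare against. Your proof is correct. In case (i), the reduction via the implicit function theorem to a smooth maximization of $\phi(u,v)=L(z(u,v),u,v)$ over the polyhedron $\{u\ge 0\}$ is exactly the right device for extracting primal feasibility and complementary slackness of $\bar z$ from dual optimality without imposing a constraint qualification on (WD); convexity of (P) then makes $\bar z$ primal optimal, and positive definiteness of $\nabla_{zz}^2 L(\bar z,\bar u,\bar v)$ (positive semidefinite by convexity and nonsingular by hypothesis) forces $\bar z$ to be the unique global minimizer of the convex function $L(\cdot,\bar u,\bar v)$, whence $\bar y=\bar z$. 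In case (ii), invoking Theorem~\ref{Strong duality0} is legitimate since convexity of $L(\cdot,u,v)$ for every $u\ge 0$ supplies the required pseudoconvexity, and the segment argument --- $f$ simultaneously strictly convex near $\bar z$ and equal to a concave function along $[\bar y,\bar z]$ --- is a clean contradiction yielding $\bar y=\bar z$.
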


\section{New reformulation for bilevel programs}
\label{WDP to BP}

In this section, by applying the Wolfe duality to the lower-level program, we present a novel single-level reformulation for the bilevel program (BP). Different from the existing reformulations mentioned in Subsection 2.1, the new reformulation does not involve any function without analytic expression and, in addition, an example is given in the next section to show that the new reformulation may satisfy the MFCQ at its feasible points.

As introduced in Subsection \ref{Wolfe-duality}, for any given $x\in X$, the Wolfe dual problem for the lower-level program ($\mathrm{P}_{x}$) is
\begin{eqnarray*}
(\mathrm{WD}_x)\quad\max\limits_{z, u, v}&&L(x, z, u, v)\\
\mbox{s.t.}&& \nabla_z L(x, z, u, v)=0,~u\geq 0,
\end{eqnarray*}
where $L(x, z, u, v) := f(x, z) + u^{T} g(x, z) + v^{T} h(x, z).$
Similarly, for any fixed $(x, u,v)$, we let $Z(x,u,v):=\{z\in \mathbb{R}^m: \nabla_z L(x,z, u, v)=0\}$ and denote by ${W}(x)$ the feasible region of ($\mathrm{WD}_{x}$).

\begin{theorem}\label{WDP}
Suppose that, for any $x\in X$, $L(x,\cdot, u, v)$ is pseudoconvex on $Y(x)\cup Z(x,u,v)$ for any $u\in \mathbb{R}^p_+$, $v\in \mathbb{R}^q$, and $(\mathrm{P}_{x})$ satisfies the Guignard constraint qualification at some solution $y_x \in {S(x)}$.
Then, if $(\bar{x}, \bar{y})$ is an optimal solution to the bilevel program {\rm (BP)}, there exists $(\bar{z}, {\bar{u}}, {\bar{v}})\in {W}(\bar{x})$ such that $(\bar{x}, \bar{y}, \bar{z}, {\bar{u}}, {\bar{v}})$  is an optimal solution to the single-level optimization problem
\begin{eqnarray*}
(\mathrm{WDP})\quad\min  && F(x,y) \\
\mbox{\rm s.t.\!\! } &&x\in X, \ y\in Y(x),\\
&&f(x, y) - L(x, z, u, v) \leq 0,\\
&&\nabla_z L(x, z, u, v) = 0,\ u\geq 0,
\end{eqnarray*}
or equivalently,
\begin{eqnarray*}
\min &&F(x, y)  \\
\mbox{\rm s.t.\!\! } && x\in X,~g(x, y) \le 0, ~h(x, y)=0,\\
&&f(x, y) - f(x, z) - u^{T} g(x, z) - v^{T} h(x, z) \leq 0, \\
&&\nabla_z f(x, z) + \nabla_z g(x, z) u + \nabla_z h(x, z) v = 0, ~u\geq 0.
\end{eqnarray*}
Conversely, if $(\bar{x}, \bar{y}, \bar{z}, {\bar{u}}, {\bar{v}})$ is an optimal solution to {\rm(WDP)}, $(\bar{x}, \bar{y})$ is an optimal solution to {\rm(BP)}.
\end{theorem}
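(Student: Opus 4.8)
The plan is to reduce everything to the weak and strong Wolfe duality results already established (Theorems~\ref{Weak duality0} and~\ref{Strong duality0}), applied for each fixed $x\in X$ to the lower-level program $(\mathrm{P}_x)$ and its Wolfe dual $(\mathrm{WD}_x)$. The key fact I would isolate first, as a small lemma, is the following characterization of the lower-level solution set: for $x\in X$ and $y\in Y(x)$, one has $y\in S(x)$ if and only if there exists $(z,u,v)\in W(x)$ with $f(x,y)-L(x,z,u,v)\le 0$. Granting this, the feasible region of $(\mathrm{WDP})$, projected onto the $(x,y)$-coordinates, is exactly the feasible region of $(\mathrm{BP})$, and both implications of the theorem drop out. (The ``equivalent'' second display of $(\mathrm{WDP})$ is obtained from the first by substituting the definitions of $L$ and $\nabla_z L$ in \eqref{Lagrangian} and rewriting $y\in Y(x)$ as $g(x,y)\le 0,\ h(x,y)=0$; this is purely algebraic.)

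For the ``only if'' direction of the characterization, suppose $y\in S(x)$. By hypothesis $L(x,\cdot,u,v)$ is pseudoconvex on $Y(x)\cup Z(x,u,v)$ for all $u\in\mathbb{R}^p_+$, $v\in\mathbb{R}^q$, and $(\mathrm{P}_x)$ satisfies the Guignard constraint qualification at some $y_x\in S(x)$, so Theorem~\ref{Strong duality0} yields $(\bar z,\bar u,\bar v)\in W(x)$ with $L(x,\bar z,\bar u,\bar v)=\min_{y'\in Y(x)}f(x,y')=f(x,y_x)$. Since the optimal value is constant on $S(x)$, we get $f(x,y)=f(x,y_x)=L(x,\bar z,\bar u,\bar v)$, hence $f(x,y)-L(x,\bar z,\bar u,\bar v)=0\le 0$. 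For the ``if'' direction, let $y\in Y(x)$ and $(z,u,v)\in W(x)$ with $f(x,y)\le L(x,z,u,v)$. Weak duality (Theorem~\ref{Weak duality0}) gives $\min_{y'\in Y(x)}f(x,y')\ \ge\ \max_{(z',u',v')\in W(x)}L(x,z',u',v')\ \ge\ L(x,z,u,v)\ \ge\ f(x,y)\ \ge\ \min_{y'\in Y(x)}f(x,y')$, where the last inequality uses $y\in Y(x)$; so equality holds throughout, $f(x,y)=\min_{y'\in Y(x)}f(x,y')$, and $y\in S(x)$.

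Given the characterization, the two directions are short. If $(\bar x,\bar y)$ solves $(\mathrm{BP})$, then $\bar y\in S(\bar x)$, so the characterization produces $(\bar z,\bar u,\bar v)\in W(\bar x)$ making $(\bar x,\bar y,\bar z,\bar u,\bar v)$ feasible for $(\mathrm{WDP})$; for any feasible point $(x,y,z,u,v)$ of $(\mathrm{WDP})$ the pair $(x,y)$ is feasible for $(\mathrm{BP})$, so $F(x,y)\ge F(\bar x,\bar y)$, and $(\bar x,\bar y,\bar z,\bar u,\bar v)$ is optimal for $(\mathrm{WDP})$. Conversely, if $(\bar x,\bar y,\bar z,\bar u,\bar v)$ solves $(\mathrm{WDP})$, then $(\bar x,\bar y)$ is feasible for $(\mathrm{BP})$, and for any feasible $(x,y)$ of $(\mathrm{BP})$ the characterization gives $(z,u,v)\in W(x)$ with $(x,y,z,u,v)$ feasible for $(\mathrm{WDP})$, whence $F(x,y)\ge F(\bar x,\bar y)$; so $(\bar x,\bar y)$ solves $(\mathrm{BP})$.

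There is no genuinely hard step: the substance is entirely supplied by Theorems~\ref{Weak duality0} and~\ref{Strong duality0}. The only points requiring care are (a) invoking strong duality at the solution $y_x$ at which Guignard's CQ is assumed and then transporting the value equality to an arbitrary $y\in S(x)$ via constancy of the optimal value on $S(x)$; (b) noting $W(x)\neq\emptyset$ so the maxima above are genuine — again guaranteed by Theorem~\ref{Strong duality0}; and (c) checking the algebraic equivalence of the two displayed forms of $(\mathrm{WDP})$. I would present the characterization as a lemma and then dispatch both implications in a few lines as above.
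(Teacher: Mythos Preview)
Your proposal is correct and takes essentially the same approach as the paper: both rest on applying Theorems~\ref{Weak duality0} and~\ref{Strong duality0} to each $(\mathrm{P}_x)$ to match up the feasible sets of $(\mathrm{BP})$ and $(\mathrm{WDP})$ in the $(x,y)$-coordinates. The only organizational difference is that the paper routes the argument through the value-function reformulation $(\mathrm{VP})$ and an intermediate problem with constraint $f(x,y)\le\max_{(z,u,v)\in W(x)}L(x,z,u,v)$, whereas you bypass this and isolate the feasibility characterization directly as a lemma; the underlying use of weak and strong duality is identical.
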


\begin{proof}
Recall that (BP) is equivalent to the value reformulation (VP) with the same optimal solution set.
By the assumptions and Theorem \ref{Strong duality0}, for each $x\in X$,
there exists an optimal solution $(z_x,u_x,v_x)\in {W}(x)$ to {\rm($\mathrm{WD}_{x}$)} such that
$$\min\limits_{y\in Y(x)}f(x,y) = L(x,z_x,u_x,v_x)=\max\limits_{(z, u, v) \in {W}(x)} L(x, z, u, v).$$
Thus, the value reformulation (VP) can be rewritten as
\begin{eqnarray}\label{wdvbp}
\min && F(x,y) \nonumber\\
\mbox{s.t.} &&x\in X, \ y\in Y(x),\\
&&f(x, y) \leq \max\limits_{(z, u, v) \in W(x)} L(x, z, u, v).\nonumber
\end{eqnarray}
Next, we show the equivalence between \eqref{wdvbp} and (WDP).

(i) Suppose that $(\bar{x}, \bar{y})$ is an optimal solution to \eqref{wdvbp}. Then, we have
\begin{eqnarray}\label{th4-00}
F(\bar{x}, \bar{y}) \leq F(x,y)
\end{eqnarray}
for any feasible point $(x,y)$ to \eqref{wdvbp} and, by the feasibility of $(\bar{x}, \bar{y})$ to \eqref{wdvbp},
\begin{eqnarray*}\label{th4-1}
f(\bar{x}, \bar{y}) \leq \max\limits_{(z, u, v)\in W(\bar{x})} L(\bar{x}, z, u, v)=L(\bar{x}, z_{\bar{x}}, u_{\bar{x}}, v_{\bar{x}}).
\end{eqnarray*}
This together with $(z_{\bar{x}}, u_{\bar{x}}, v_{\bar{x}})\in {W}(\bar{x})$ indicates that $(\bar{x}, \bar{y}, z_{\bar{x}}, u_{\bar{x}}, v_{\bar{x}})$ is feasible to (WDP).
For any feasible point $(x, y, z, u, v)$ to (WDP), since
\begin{eqnarray*}\label{th4-3}
f(x, y) \leq L(x, z, u, v)\leq\max\limits_{(z', u', v') \in {W}(x)} L(x, z', u', v'),
\end{eqnarray*}
where the second inequality follows from $(z, u, v) \in{W}(x)$, $(x, y)$ is feasible to \eqref{wdvbp} and hence we have \eqref{th4-00}. By the arbitrariness of $(x, y, z, u, v)$,  $(\bar{x}, \bar{y}, z_{\bar{x}}, u_{\bar{x}}, v_{\bar{x}})$ is an optimal solution to (WDP).

(ii) Suppose that $(\bar{x}, \bar{y}, \bar{z}, {\bar{u}}, {\bar{v}})$ is an optimal solution to (WDP). We have $\nabla_z L(\bar{x}, \bar{z}, {\bar{u}}, {\bar{v}})=0, \bar{u}\ge 0$ (i.e., $(\bar{z}, {\bar{u}}, {\bar{v}})\in W(\bar{x})$), and
\begin{eqnarray*}
f(\bar{x}, \bar{y}) \leq L(\bar{x},\bar{z}, {\bar{u}}, {\bar{v}})\le \max\limits_{(z, u, v)\in W(\bar{x})} L(\bar{x}, z, u, v).
\end{eqnarray*}
This means that $(\bar{x}, \bar{y})$ is feasible to \eqref{wdvbp}. For any feasible point $(x, y)$ to \eqref{wdvbp}, we have
\begin{eqnarray*}\label{th4-3}
f(x, y) \leq \max\limits_{(z, u, v) \in {W}(x)} L(x, z, u, v)= L(x,z_x,u_x,v_x)
\end{eqnarray*}
and so $(x,y,z_x,u_x,v_x)$ is feasible to (WDP). By the optimality of $(\bar{x}, \bar{y}, \bar{z}, {\bar{u}}, {\bar{v}})$ to (WDP), we have \eqref{th4-00}. By the arbitrariness of $(x, y)$, $(\bar{x}, \bar{y})$ is an optimal solution to \eqref{wdvbp}.

This completes the proof.
\end{proof}

\begin{remark}\label{wdpcondition}\rm
As stated in Remark \ref{weaker convexity}, the pseudoconvexity assumption in  Theorem \ref{WDP} can be weakened by the $\mathcal F$-pseudoconvexity. The following example shows that, even if the lower-level program does not satisfy the $\mathcal F$-pseudoconvexity, the original bilevel program (BP) is still possibly equivalent to (WDP).
\end{remark}

\begin{example}\label{nonconvexsBP=WDP}\rm
Consider the bilevel program
\begin{eqnarray}\label{ex-BP=WDP-1}
\min && 2x-y \\
\mbox{s.t.}&& x\geq0, ~y\in S(x)=\arg\min\limits_y\{-e^{-(y-x)^2}+0.3y^2-0.6xy: y\geq x\}.\nonumber
\end{eqnarray}
For any $x\ge 0$, since the lower-level objective function is strictly increasing with respect to $y$ on $[x,+\infty)$, we always have $S(x)=\{x\}$. Hence, \eqref{ex-BP=WDP-1} has a unique optimal solution $(\bar{x},\bar{y})=(0,0)$ with the optimal value to be $0$.

For this problem, (WDP) becomes
\begin{eqnarray}\label{ex-BP=WDP-2}
\min && 2x-y \nonumber\\
\mbox{s.t.}&& x\geq0, ~y\geq x, \\
&&-e^{-(y-x)^2}+0.3y^2-0.6xy- L(x,z,u)\le 0, \nonumber\\
&&2(z-x)e^{-(z-x)^2}+0.6(z-x)-u=0,~u\geq0,\nonumber
\end{eqnarray}
where the lower-level Lagrangian function $$L(x,z,u):= -e^{-(z-x)^2}+0.3z^2-0.6xz-u(z-x).$$ We next show that, for any given $x\geq 0$, $L(x,\cdot,\bar{u})$ is not $\mathcal F$-pseudoconvex on $Y(x)\cup Z(x,\bar{u})=[x,+\infty)$ for some $\bar{u}\ge 0$, but the original bilevel program \eqref{ex-BP=WDP-1} is still equivalent to \eqref{ex-BP=WDP-2} in the sense of Theorem \ref{WDP}.
\begin{figure}[htbp]
  \includegraphics[width=9cm,height=6cm]{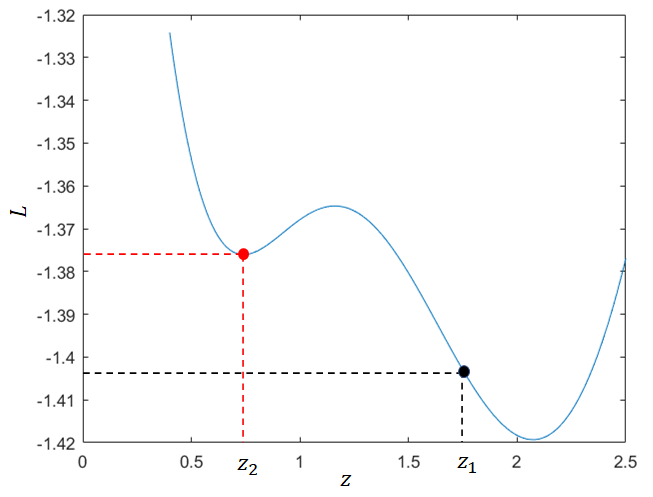}
  \caption{The graph of the function $L(x,\cdot,\bar{u})-0.3x^2$ with $\bar{u}=1.3$}\label{fig}
\end{figure}

(i) Recall that $L(x,\cdot,\bar{u})$ is $\mathcal F$-pseudoconvex over $[x,+\infty)$ if there exists a sublinear function ${\mathcal F}(z_1, z_2; \cdot)$, depend on $z_1, z_2\in [x,+\infty)$, such that
\begin{eqnarray*}
{\mathcal F}\big(z_1, z_2; \nabla_z L(x, z_2,\bar{u})\big) \geq 0 \ \Rightarrow \ L(x,z_1,\bar{u}) \geq L(x, z_2,\bar{u}).
\end{eqnarray*}
Take $\bar{u}=1.3$, $z_1=1.75+x$, and select $z_2$ as drawn in Figure \ref{fig} such that $\nabla_z L(x, z_2,\bar{u})=0$. For any sublinear function ${\mathcal F}(z_1, z_2; \cdot)$, by the positive homogeneity, we  have ${\mathcal F}\big(z_1, z_2; \nabla_z L(x, z_2,\bar{u})\big)=0$. However, by direct calculations, we have
$$
L(x,z_1,\bar{u}) \approx -1.403+0.3x^2 <-1.376 +0.3x^2 \approx L(x, z_2,\bar{u}).
$$
This means that $L(x,\cdot,\bar{u})$ is not $\mathcal F$-pseudoconvex on $[x,+\infty)$ for $\bar{u}= 1.3$.

(ii) By substituting with $y=x+s$, $z=x+t$, and eliminating the variable $u$, problem \eqref{ex-BP=WDP-2} can be rewritten as
\begin{eqnarray}\label{ex-BP=WDP-3}
\min  && x-s \nonumber\\
\mbox{s.t.}&& x\geq0, ~s\geq 0,~t\geq 0, \\
&&-e^{-s^2}+0.3s^2\leq -e^{-t^2}(2t^2+1)-0.3t^2.\nonumber
\end{eqnarray}
Consider two functions
\begin{eqnarray*}
\pi(a):=-e^{-a}+0.3a, \qquad \theta(b):=-e^{-b}(2b+1)-0.3b.
\end{eqnarray*}
The function $\pi$ is strictly increasing and hence it has a unique minimal point $\bar{a}=0$ on $[0,\infty)$ with $\pi(\bar{a})=-1$. On the other hand, since $\theta(b)\rightarrow -\infty$ as $b\rightarrow +\infty$ and $\theta(b)\leq 0$ for any $b\geq 0$, the function $\theta$ must attain its maximum on $[0,\infty)$ at the endpoint $\bar{b}=0$ or the stationary points satisfying $\theta'(\hat{b})=0$ with $\hat{b}>0$.
Solving $\theta'(\hat{b})=e^{-\hat{b}}(2\hat{b}-1)-0.3=0$ with $\hat{b}>0$, we have
\begin{eqnarray}\label{ex-BP=WDP-4}
e^{-\hat{b}}=\frac{3}{10(2\hat{b}-1)}
\end{eqnarray}
and $\hat{b}>0.65$ by $e^{-\hat{b}}< 1$.
Substituting \eqref{ex-BP=WDP-4} into $\theta$, we have
$\theta(\hat{b})=-\frac{6\hat{b}^2+3\hat{b}+3}{10(2\hat{b}-1)}.$
Since $\theta(\bar{b})=-1$ and $\hat{b}> 0.65$, we have
\begin{eqnarray*}
\theta(\hat{b})-\theta(\bar{b})=1-\frac{6\hat{b}^2+3\hat{b}+3}{10(2\hat{b}-1)}
=-\frac{6((\hat{b}-\frac{17}{12})^2+\frac{23}{144})}{10(2\hat{b}-1)}<0.
\end{eqnarray*}
This means that $\theta$ attains its maximum on $[0,\infty)$ at $\bar{b}=0$ with $\theta(\bar{b})=-1$.

The above analysis indicates that the last constraint holds only when $s=t=0$. As a result, problem \eqref{ex-BP=WDP-3} has a unique optimal solution $(\tilde{x},\tilde{s},\tilde{t})=(0,0,0)$. Then, problem \eqref{ex-BP=WDP-2} has a unique optimal solution $(\tilde{x},\tilde{y},\tilde{z},\tilde{u})=(0,0,0,0)$ with the optimal value to be $0$. Consequently, the bilevel program \eqref{ex-BP=WDP-1} is still equivalent to its (WDP) reformulation \eqref{ex-BP=WDP-2} in the sense of Theorem \ref{WDP}, although the assumptions of Theorem \ref{WDP} are not satisfied. 
\end{example}

\begin{theorem}\label{bi-optimal}
Let $(\bar{x}, \bar{y}, \bar{z}, \bar{u}, \bar{v})$ be feasible to {\rm(WDP)} and the weak Wolfe duality hold for $(\mathrm{P}_{\bar{x}})$ and {\rm($\mathrm{WD}_{\bar{x}}$)}.
Then, $\bar{y}$ is globally optimal to {\rm(P$_{\bar{x}}$)}
and $(\bar{z}, \bar{u}, \bar{v})$ is globally optimal to {\rm($\mathrm{WD}_{\bar{x}}$)}.
\end{theorem}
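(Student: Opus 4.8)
The plan is to prove this by a short sandwiching (weak-duality squeeze) argument, with no estimates involved. First I would unpack exactly what feasibility of $(\bar{x}, \bar{y}, \bar{z}, \bar{u}, \bar{v})$ for (WDP) provides. The conditions $\bar{x}\in X$ and $\bar{y}\in Y(\bar{x})$ say precisely that $\bar{y}$ is feasible for $(\mathrm{P}_{\bar{x}})$. The conditions $\nabla_z L(\bar{x}, \bar{z}, \bar{u}, \bar{v})=0$ and $\bar{u}\ge 0$ say precisely that $(\bar{z}, \bar{u}, \bar{v})\in W(\bar{x})$, i.e. it is feasible for $(\mathrm{WD}_{\bar{x}})$. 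The remaining inequality constraint gives the crucial link $f(\bar{x}, \bar{y}) \le L(\bar{x}, \bar{z}, \bar{u}, \bar{v})$.

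Next I would combine these facts with the weak Wolfe duality hypothesis for $(\mathrm{P}_{\bar{x}})$ and $(\mathrm{WD}_{\bar{x}})$, which reads $\min_{y\in Y(\bar{x})} f(\bar{x}, y) \ge \max_{(z,u,v)\in W(\bar{x})} L(\bar{x}, z, u, v)$. Since $\bar{y}$ is feasible for $(\mathrm{P}_{\bar{x}})$ we have $\min_{y\in Y(\bar{x})} f(\bar{x}, y) \le f(\bar{x}, \bar{y})$, and since $(\bar{z}, \bar{u}, \bar{v})$ is feasible for $(\mathrm{WD}_{\bar{x}})$ we have $L(\bar{x}, \bar{z}, \bar{u}, \bar{v}) \le \max_{(z,u,v)\in W(\bar{x})} L(\bar{x}, z, u, v)$. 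Concatenating these with the feasibility link and weak duality yields the chain
$$\min_{y\in Y(\bar{x})} f(\bar{x}, y) \le f(\bar{x}, \bar{y}) \le L(\bar{x}, \bar{z}, \bar{u}, \bar{v}) \le \max_{(z,u,v)\in W(\bar{x})} L(\bar{x}, z, u, v) \le \min_{y\in Y(\bar{x})} f(\bar{x}, y),$$
so the two extreme terms coincide and every inequality in the chain is in fact an equality.

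Finally, I would read off the two equalities. From $f(\bar{x}, \bar{y}) = \min_{y\in Y(\bar{x})} f(\bar{x}, y)$ we conclude that $\bar{y}$ is a global minimizer of $(\mathrm{P}_{\bar{x}})$; from $L(\bar{x}, \bar{z}, \bar{u}, \bar{v}) = \max_{(z,u,v)\in W(\bar{x})} L(\bar{x}, z, u, v)$ we conclude that $(\bar{z}, \bar{u}, \bar{v})$ is a global maximizer of $(\mathrm{WD}_{\bar{x}})$. There is no genuine obstacle here; the only things to watch are orienting the weak-duality inequality in the direction that closes the loop, and correctly matching each part of (WDP) feasibility to feasibility of the appropriate subproblem. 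I do not expect any delicate analysis.
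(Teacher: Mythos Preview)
Your argument is correct and is essentially identical to the paper's own proof: both use feasibility of $(\bar{x},\bar{y},\bar{z},\bar{u},\bar{v})$ for (WDP) to form the chain $\min_{y\in Y(\bar{x})} f(\bar{x},y)\le f(\bar{x},\bar{y})\le L(\bar{x},\bar{z},\bar{u},\bar{v})\le \max_{(z,u,v)\in W(\bar{x})} L(\bar{x},z,u,v)$, then invoke weak Wolfe duality to force all inequalities to be equalities. There is nothing to add.
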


\begin{proof}
By the feasibility of $(\bar{x}, \bar{y}, \bar{z}, \bar{u}, \bar{v})$ to {\rm(WDP)}, we have
\begin{eqnarray}\label{bi-optimal-1}
\min\limits_{y\in Y(\bar{x})}f(\bar{x},y)\leq f(\bar{x}, \bar{y})\leq L(\bar{x}, \bar{z}, \bar{u}, \bar{v})\leq \max\limits_{(z,u,v)\in W(\bar{x})} L(\bar{x},z,u,v).
\end{eqnarray}
By the weak Wolfe duality assumption, we have $$\min\limits_{y\in Y(\bar{x})}f(\bar{x},y)\ge \max\limits_{(z,u,v)\in W(\bar{x})} L(\bar{x},z,u,v).$$ Therefore, all inequalities in \eqref{bi-optimal-1} should be equalities, which means that $\bar{y}$ is a globally optimal solution to {\rm(P$_{\bar{x}}$)} and $(\bar{z}, \bar{u}, \bar{v})$ is a globally optimal solution to {\rm($\mathrm{WD}_{\bar{x}}$)}.
This completes the proof.
\end{proof}

Theorem \ref{WDP} shows the equivalence between (BP) and (WDP) in the globally optimal sense. Since they are both nonconvex optimization problems, it is necessary to investigate their equivalence in the locally optimal sense. Denote by $U(w)$ some neighborhood of a point $w$.

\begin{theorem}\label{local solution1}
{\rm(i)} Suppose that $(\bar{x},\bar{y})$ is locally optimal to {\rm(BP)} restricted on $U(\bar{x})\times U(\bar{y})$ and {\rm($\mathrm{P}_{\bar{x}}$)} satisfies the Guignard constraint qualification at some optimal solution $\bar{z}$ with $\{\bar{u},\bar{v}\}$ being the corresponding Lagrange multipliers. If $(\mathrm{P}_{x})$ and {\rm($\mathrm{WD}_{x}$)} satisfy the weak Wolfe duality for any $x\in U(\bar{x})$, $(\bar{x},\bar{y},\bar{z},\bar{u},\bar{v}) $ is locally optimal to {\rm(WDP)}.

{\rm(ii)} Conversely, let $(\bar{x},\bar{y},\bar{z},\bar{u},\bar{v}) $ be locally optimal to {\rm($\mathrm{WDP}$)} restricted on $U(\bar{x})\times U(\bar{y}) \times U(\bar{z})\times\mathbb{R}^{p}\times\mathbb{R}^{q}$ with $U(\bar{y}) \subseteq U(\bar{z})$. 
If {\rm($\mathrm{P}_{x}$)} satisfies the Guignard constraint qualification at all points in $S(x)$ for any $x\in U(\bar{x})$, $(\bar{x},\bar{y}) $ is locally optimal to {\rm(BP)}.
\end{theorem}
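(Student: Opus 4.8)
The plan is to prove both implications by explicitly moving between feasible points of (BP) and feasible points of (WDP): in one direction I manufacture Wolfe-dual multipliers from the Guignard constraint qualification, and in the other I use the weak Wolfe duality inequality to recover lower-level global optimality; the local statements then reduce to checking that the constructed point lies inside the prescribed neighborhood.

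\emph{Part (i).} First I would verify that the lifted point $(\bar{x},\bar{y},\bar{z},\bar{u},\bar{v})$ is feasible to (WDP). Since $(\mathrm{P}_{\bar{x}})$ satisfies the Guignard CQ at $\bar{z}\in S(\bar{x})$ with multipliers $\bar{u}\ge 0$, $\bar{v}$, we get $\nabla_z L(\bar{x},\bar{z},\bar{u},\bar{v})=0$, $\bar{u}^{T}g(\bar{x},\bar{z})=0$, $h(\bar{x},\bar{z})=0$, hence $(\bar{z},\bar{u},\bar{v})\in W(\bar{x})$ and $L(\bar{x},\bar{z},\bar{u},\bar{v})=f(\bar{x},\bar{z})=\min_{y\in Y(\bar{x})}f(\bar{x},y)=f(\bar{x},\bar{y})$, the last equality because both $\bar{z}$ and $\bar{y}$ solve $(\mathrm{P}_{\bar{x}})$; thus the coupling constraint holds (with equality) and, together with $\bar{x}\in X$ and $\bar{y}\in Y(\bar{x})$, feasibility follows. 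For local optimality, let $(x,y,z,u,v)$ be feasible to (WDP) with $x\in U(\bar{x})$ and $y\in U(\bar{y})$ (the $z,u,v$ slots are unconstrained, so I take the (WDP)-neighborhood $U(\bar{x})\times U(\bar{y})\times\mathbb{R}^{m+p+q}$). Then $f(x,y)\le L(x,z,u,v)\le\max_{(z',u',v')\in W(x)}L(x,z',u',v')\le\min_{y'\in Y(x)}f(x,y')$, where the last inequality is the assumed weak Wolfe duality on $U(\bar{x})$; since $y\in Y(x)$ this forces $y\in S(x)$, so $(x,y)$ is feasible to (BP) and lies in $U(\bar{x})\times U(\bar{y})$, and local optimality of $(\bar{x},\bar{y})$ for (BP) yields $F(\bar{x},\bar{y})\le F(x,y)$.

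\emph{Part (ii).} Feasibility of $(\bar{x},\bar{y})$ for (BP) means $\bar{x}\in X$ (immediate from feasibility of the lifted point) and $\bar{y}\in S(\bar{x})$; the latter I would deduce from the weak Wolfe duality inequality at $\bar{x}$ --- equivalently, Theorem~\ref{bi-optimal} --- via $f(\bar{x},\bar{y})\le L(\bar{x},\bar{z},\bar{u},\bar{v})\le\max_{W(\bar{x})}L\le\min_{y\in Y(\bar{x})}f(\bar{x},y)$ together with $\bar{y}\in Y(\bar{x})$. For local optimality, take any $(x,y)$ feasible to (BP) with $x\in U(\bar{x})$ and $y\in U(\bar{y})$; the Guignard CQ at $y\in S(x)$ provides $u\ge 0$, $v$ with $(y,u,v)\in W(x)$ and $L(x,y,u,v)=f(x,y)$, so the point $(x,y,y,u,v)$ is feasible to (WDP) (its coupling constraint reads $f(x,y)-L(x,y,u,v)=0$). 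Crucially, this point lies in $U(\bar{x})\times U(\bar{y})\times U(\bar{z})\times\mathbb{R}^{p}\times\mathbb{R}^{q}$ because its $z$-slot is filled by $y\in U(\bar{y})\subseteq U(\bar{z})$ --- which is exactly where the hypothesis $U(\bar{y})\subseteq U(\bar{z})$ is used --- so local optimality of $(\bar{x},\bar{y},\bar{z},\bar{u},\bar{v})$ for (WDP) gives $F(\bar{x},\bar{y})\le F(x,y)$, as desired.

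The routine content is the two feasibility checks and the neighborhood bookkeeping. I expect the only genuinely delicate points to be: in (ii), remembering that a (BP)-feasible point must be lifted with $z$ set equal to $y$ (not to $\bar{z}$), which is precisely why the inclusion $U(\bar{y})\subseteq U(\bar{z})$ is imposed; and, also in (ii), justifying $\bar{y}\in S(\bar{x})$, which needs a weak-duality-type statement at $\bar{x}$ rather than merely the Guignard CQ, so I would invoke Theorem~\ref{bi-optimal} (or note that weak Wolfe duality is in force under the standing pseudoconvexity assumptions of this section).
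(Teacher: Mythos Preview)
Your proposal is correct and follows essentially the same route as the paper's proof: both parts hinge on passing between (BP)-feasible and (WDP)-feasible points via the KKT/Wolfe-dual lifting, with weak Wolfe duality in (i) forcing $y\in S(x)$ and the Guignard CQ in (ii) producing multipliers so that $(x,y,y,u,v)$ is (WDP)-feasible with its $z$-slot landing in $U(\bar{y})\subseteq U(\bar{z})$. Two minor differences worth noting: in (i) the paper restricts to a full product neighborhood $U(\bar{x})\times U(\bar{y})\times U(\bar{z})\times U(\bar{u})\times U(\bar{v})$ rather than your $U(\bar{x})\times U(\bar{y})\times\mathbb{R}^{m+p+q}$ (your version is slightly stronger but the argument is identical since weak duality only uses $x\in U(\bar{x})$); and in (ii) the paper omits the verification that $(\bar{x},\bar{y})$ is feasible to (BP), which you correctly flag as requiring weak Wolfe duality at $\bar{x}$ via Theorem~\ref{bi-optimal} --- this is indeed not listed among the explicit hypotheses of (ii), so your appeal to the section's standing pseudoconvexity assumption is the right way to close that gap.
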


\begin{proof}
{\rm(i)} Since both $\bar{y}$ and $\bar{z}$ are globally optimal solutions to {\rm($\mathrm{P}_{\bar{x}}$)} and $\{\bar{u},\bar{v}\}$ are the Lagrange multipliers corresponding to $\bar{z}$, we have
\begin{eqnarray*}
f(\bar{x},\bar{y}) = f(\bar{x},\bar{z}) = L(\bar{x},\bar{z},\bar{u},\bar{v})
\end{eqnarray*}
and hence $(\bar{x},\bar{y},\bar{z},\bar{u},\bar{v}) $ is feasible to {\rm(WDP)}.

On the other hand, for any feasible point $(x,y,z,u,v)$ to {\rm(WDP)} in the neighborhood $U(\bar{x})\times U(\bar{y})\times U(\bar{z})\times U(\bar{u})\times U(\bar{v})$ with $U(\bar{z})\times U(\bar{u})\times U(\bar{v})$ to be a neighbourhood of $(\bar{z},\bar{u},\bar{v})$,
by the weak Wolfe duality assumption,  
the conclusion of Theorem \ref{bi-optimal} is true and hence $y\in S(x)$, which means that $(x,y)$ is feasible to {\rm(BP)} and belongs to $U(\bar{x})\times U(\bar{y})$. Noting that $(\bar{x},\bar{y})$ is locally optimal to {\rm(BP)} restricted on $U(\bar{x})\times U(\bar{y})$, we have
\begin{eqnarray}\label{local solution1-1}
F(\bar{x},\bar{y}) \leq F(x, y).
\end{eqnarray}
By the arbitrariness of $(x,y,z,u,v)$, we get that $(\bar{x},\bar{y},\bar{z},\bar{u},\bar{v}) $ is a locally optimal solution to {\rm(WDP)}.

{\rm(ii)} Let $(x,y)\in U(\bar{x})\times U(\bar{y})$ be feasible point to {\rm(BP)}. It follows that $y\in S(x)$. By the assumptions, {\rm($\mathrm{P}_{x}$)} satisfies the Guignard constraint qualification at $y$ and hence there exist multipliers $\{u,v\}$ satisfying
\begin{eqnarray*}
\nabla_y L(x, y,u,v) = 0, ~u\geq 0, ~u^{T} g(x, y)=0.
\end{eqnarray*}
It then follows from $h(x,y)=0$ that $f(x, y)=L(x, y,u,v)$. In consequence, $(x,y,y,u,v)$ is feasible to {\rm(WDP)} and belongs to $U(\bar{x})\times U(\bar{y})\times U(\bar{y})\times U(\bar{u})\times U(\bar{v})$. Since $(\bar{x},\bar{y},\bar{z},\bar{u},\bar{v}) $ is locally optimal to {\rm($\mathrm{WDP}$)} restricted on $U(\bar{x})\times U(\bar{y}) \times U(\bar{z})\times\mathbb{R}^{p}\times\mathbb{R}^{q}$ with $U(\bar{y}) \subseteq U(\bar{z})$, we have \eqref{local solution1-1}. By the arbitrariness of $(x,y)$,
$(\bar{x},\bar{y}) $ is a locally optimal solution to {\rm(BP)}. This completes the proof.
\end{proof}

The local optimality restricted on $U(\bar{x})\times U(\bar{y}) \times U(\bar{z})\times \mathbb{R}^{p}\times\mathbb{R}^{q}$ assumed in Theorem \ref{local solution1}(ii) is obviously very stringent. In the rest of this subsection, we derive a result by restricting the local optimality on $U(\bar{x})\times U(\bar{y}) \times U(\bar{y})\times U(\bar{u})\times U(\bar{v})$. We start with the following lemma.

\begin{lemma}\label{lemma-usc}
The set-valued mapping $\Lambda : \mathbb{R}^{n+2m}\rightrightarrows \mathbb{R}^{p+q}$ defined by
\begin{eqnarray}\label{Lambda}
\Lambda(x, y, z) := \big\{(u, v)\in\mathbb{R}^{p+q} : (x, y, z, u, v)\in {\Omega} \big\}
\end{eqnarray}
is outer semicontinuous, 
where
\begin{eqnarray}\label{stable system}
{\Omega}:=\left\{ (x, y, z, u, v) :
\begin{array}{l}
\nabla _z L(x, z, u, v) = 0, ~u\ge 0\\
f(x, y) - L(x, z, u, v) \leq 0
\end{array}
\right\}.
\end{eqnarray}
\end{lemma}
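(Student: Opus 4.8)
The plan is to identify the graph of $\Lambda$ with the set $\Omega$ and then invoke the standard fact that a set-valued mapping is outer semicontinuous on the whole space if and only if its graph is a closed set (see, e.g., Rockafellar and Wets, \emph{Variational Analysis}). Concretely, from the definition \eqref{Lambda} one has
\begin{eqnarray*}
\mathrm{gph}\,\Lambda = \big\{ (x, y, z, u, v) : (u, v) \in \Lambda(x, y, z) \big\} = {\Omega},
\end{eqnarray*}
where ${\Omega}\subseteq\mathbb{R}^{n+2m+p+q}$ is the set in \eqref{stable system}, the identification being the obvious one that regroups the coordinates $(x,y,z)$ and $(u,v)$. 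Thus the whole statement reduces to showing that ${\Omega}$ is closed.

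To prove that ${\Omega}$ is closed, I would write it as the intersection of three sets and check that each is closed. First, under the blanket hypothesis that $\{f, g, h\}$ are twice continuously differentiable, the map $(x, z, u, v) \mapsto \nabla_z L(x, z, u, v) = \nabla_z f(x, z) + \nabla_z g(x, z) u + \nabla_z h(x, z) v$ is continuous, so $\{\nabla_z L(x, z, u, v) = 0\}$ is the preimage of the closed singleton $\{0\}$ under a continuous map and hence closed. Second, $\{u \ge 0\}$ is plainly closed. Third, since $f$ is continuous and $L$ is continuous, the function $(x, y, z, u, v) \mapsto f(x, y) - L(x, z, u, v)$ is continuous, so $\{f(x, y) - L(x, z, u, v) \le 0\}$ is the preimage of $(-\infty, 0]$ under a continuous map and therefore closed. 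Intersecting these three closed sets, ${\Omega}$ is closed, whence $\mathrm{gph}\,\Lambda$ is closed and $\Lambda$ is outer semicontinuous.

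There is no serious obstacle here; the only points that require care are (a) matching the closed-graph characterization of outer semicontinuity with the coordinate-regrouping identification $\mathrm{gph}\,\Lambda = {\Omega}$, and (b) making sure the continuity of $\nabla_z L$ is genuinely available, which it is by the $C^2$ assumption on $\{f, g, h\}$. If one prefers to avoid the closed-graph characterization, the same argument runs directly from the sequential definition: given $(x^k, y^k, z^k) \to (x, y, z)$ and $(u^k, v^k) \in \Lambda(x^k, y^k, z^k)$ with $(u^k, v^k) \to (u, v)$, one passes to the limit in the three defining (in)equalities using the continuity of $\nabla_z L$, of $f$, and of $L$ to conclude that $(u, v) \in \Lambda(x, y, z)$.
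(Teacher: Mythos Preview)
Your proof is correct and follows essentially the same idea as the paper: both reduce outer semicontinuity to a closedness statement via a characterization from Rockafellar--Wets and then appeal to the continuity of the defining functions guaranteed by the $C^2$ assumption on $\{f,g,h\}$. The only cosmetic difference is that the paper invokes the criterion ``$\Lambda$ is osc iff $\Lambda^{-1}(B)$ is closed for every compact $B$'' (Theorem~5.7 in \cite{Rockafellar2009variational}), whereas you use the equivalent closed-graph criterion $\mathrm{gph}\,\Lambda=\Omega$ closed; your route is arguably the more direct of the two.
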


\begin{proof} By Theorem 5.7 in \cite{Rockafellar2009variational}, $\Lambda$ is outer semicontinuous if and only if $\Lambda^{-1}(B)$ is closed for every compact set $B\subset \mathbb{R}^{p+q}$, which can be obtained by the twice continuous differentiability of the functions $\{f, g, h\}$.
\end{proof}

\begin{theorem}
Let $(\bar{x},\bar{y},\bar{y},\bar{u},\bar{v}) $ be locally optimal to {\rm(WDP)} for any $(\bar{u},\bar{v})\in \Lambda(\bar{x},\bar{y},\bar{y})$ and the weak Wolfe duality hold for $(\mathrm{P}_{\bar{x}})$ and {\rm($\mathrm{WD}_{\bar{x}}$)}. If {\rm($\mathrm{P}_{{x}}$)} satisfies the MFCQ at ${y}\in S({x})$ for all $(x,y)$ closely to $(\bar{x},\bar{y})$, $(\bar{x},\bar{y})$ is locally optimal to {\rm(BP)}.
\end{theorem}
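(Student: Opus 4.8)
The plan is to argue by contradiction: starting from a feasible sequence for (BP) that beats $(\bar x,\bar y)$, I will build a feasible sequence for (WDP) that clusters at $(\bar x,\bar y,\bar y,\bar u,\bar v)$ for some $(\bar u,\bar v)\in\Lambda(\bar x,\bar y,\bar y)$ and beats that point, contradicting the assumed local optimality. Before that, I would record that $(\bar x,\bar y)$ is feasible to (BP). Since $(\bar x,\bar y,\bar y,\bar u,\bar v)$ is in particular feasible to (WDP) for the given multipliers, the weak Wolfe duality hypothesis together with Theorem \ref{bi-optimal} gives $\bar y\in S(\bar x)$; as $\bar x\in X$, the pair $(\bar x,\bar y)$ is feasible to (BP), and since $\bar y\in S(\bar x)$ the MFCQ hypothesis applies to $(\mathrm P_{\bar x})$ at $\bar y$.

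Now suppose $(\bar x,\bar y)$ is not locally optimal to (BP). Then there is a sequence $(x_k,y_k)\to(\bar x,\bar y)$ with $x_k\in X$, $y_k\in S(x_k)$, and $F(x_k,y_k)<F(\bar x,\bar y)$. For $k$ large, $(x_k,y_k)$ is close to $(\bar x,\bar y)$, so $(\mathrm P_{x_k})$ satisfies the MFCQ at $y_k\in S(x_k)$ and there exist multipliers $(u_k,v_k)$ with $\nabla_yL(x_k,y_k,u_k,v_k)=0$, $u_k\ge0$, and $u_k^Tg(x_k,y_k)=0$. Using $h(x_k,y_k)=0$ and $g(x_k,y_k)\le0$ one checks $f(x_k,y_k)-L(x_k,y_k,u_k,v_k)=-u_k^Tg(x_k,y_k)-v_k^Th(x_k,y_k)=0$, so $(u_k,v_k)\in\Lambda(x_k,y_k,y_k)$ and $(x_k,y_k,y_k,u_k,v_k)$ is feasible to (WDP).

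The crux, and the place where the full MFCQ (rather than merely Guignard's CQ) is used, is to show that the multipliers can be chosen bounded. If not, then along a subsequence $t_k:=\|(u_k,v_k)\|\to\infty$ and $(u_k,v_k)/t_k\to(\hat u,\hat v)$ with $\|(\hat u,\hat v)\|=1$ and $\hat u\ge0$. Dividing the stationarity equation by $t_k$ and letting $k\to\infty$ gives $\nabla_yg(\bar x,\bar y)\hat u+\nabla_yh(\bar x,\bar y)\hat v=0$, while $u_k^Tg(x_k,y_k)=0$ together with $g_i(x_k,y_k)\to g_i(\bar x,\bar y)$ forces $\hat u_i=0$ whenever $g_i(\bar x,\bar y)<0$; hence $\hat u$ is supported on the active index set at $\bar y$, and $(\hat u,\hat v)\ne0$ contradicts the MFCQ of $(\mathrm P_{\bar x})$ at $\bar y$. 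So, passing to a further subsequence, $(u_k,v_k)\to(\bar u,\bar v)$; since $(x_k,y_k,y_k)\to(\bar x,\bar y,\bar y)$ with $(u_k,v_k)\in\Lambda(x_k,y_k,y_k)$, the outer semicontinuity of $\Lambda$ from Lemma \ref{lemma-usc} yields $(\bar u,\bar v)\in\Lambda(\bar x,\bar y,\bar y)$.

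By hypothesis $(\bar x,\bar y,\bar y,\bar u,\bar v)$ is locally optimal to (WDP), so $F(\bar x,\bar y)\le F(x,y)$ for every feasible point $(x,y,z,u,v)$ of (WDP) lying in a fixed neighborhood $N$ of $(\bar x,\bar y,\bar y,\bar u,\bar v)$. For $k$ large enough, $(x_k,y_k,y_k,u_k,v_k)\in N$ and is feasible to (WDP), whence $F(\bar x,\bar y)\le F(x_k,y_k)$, contradicting $F(x_k,y_k)<F(\bar x,\bar y)$. Therefore $(\bar x,\bar y)$ is locally optimal to (BP). I expect the only nontrivial point to be the uniform boundedness of the multiplier sequence $\{(u_k,v_k)\}$; everything else is a routine limiting argument resting on Theorem \ref{bi-optimal} and Lemma \ref{lemma-usc}.
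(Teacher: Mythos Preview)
Your proposal is correct and follows essentially the same route as the paper: feasibility of $(\bar x,\bar y)$ via Theorem \ref{bi-optimal}, a contradiction argument producing a WDP-feasible sequence $(x_k,y_k,y_k,u_k,v_k)$ from KKT multipliers, boundedness of the multipliers via MFCQ at $\bar y$, and the outer semicontinuity of $\Lambda$ from Lemma \ref{lemma-usc} to land the limit in $\Lambda(\bar x,\bar y,\bar y)$. The only cosmetic difference is that the paper spells out the MFCQ contradiction using an explicit direction $d$, whereas you invoke the equivalent ``no nonzero singular multiplier'' characterization directly.
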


\begin{proof}
Since $(\bar{x},\bar{y},\bar{y},\bar{u},\bar{v}) $ is feasible to {\rm(WDP)}, 
it follows from the weak Wolfe duality assumption and Theorem \ref{bi-optimal} that 
$\bar{y}\in S(\bar{x})$ and hence $(\bar{x},\bar{y})$ is  feasible to {\rm(BP)}.
We next show the result by contradiction.

Suppose that $(\bar{x},\bar{y})$ is not locally optimal to {\rm(BP)}. There must exist a sequence $\{(x^k, y^k)\}$ converging to $(\bar{x},\bar{y})$ with $x^k\in X$, $y^k\in S(x^k)$ such that
\begin{eqnarray}\label{local-inequality}
F(x^k, y^k) < F(\bar{x},\bar{y}), \quad k=1,2,\cdots.
\end{eqnarray}
By the assumptions, when $k$ is large sufficiently, {\rm($\mathrm{P}_{{x^k}}$)} satisfies the MFCQ at ${y^k}\in S({x^k})$, there exist multipliers $u^k\in\mathbb{R}^p$ and $v^k\in\mathbb{R}^q$ such that
\begin{eqnarray}
\label{stable system1}&&\nabla _y L(x^k, y^k, u^k, v^k) = 0, ~~h(x^k, y^k)=0,\\
\label{stable system2}&&u^k\ge 0,~g(x^k, y^k) \leq 0,~(u^k)^Tg(x^k, y^k)=0 .
\end{eqnarray}
We next show that the sequence of $\{(u^k,v^k)\}$ is bounded.

Suppose by contradiction that $\{(u^k,v^k)\}$ is unbounded. Without loss of generality, we may assume that
$\{\frac{u^k}{\|u^k\|+\|v^k\|}\}$ and $\{\frac{v^k}{\|u^k\|+\|v^k\|}\}$ are  convergent.
Denote their limits by $\hat{u}$ and $\hat{v}$ respectively.
It follows that
\begin{eqnarray}\label{uv-inequality}
\|\hat{u}\|+\|\hat{v}\|=1.
\end{eqnarray}
Dividing by $\|{u}^k\|+\|{v}^k\|$ on both sides of \eqref{stable system1}-\eqref{stable system2} and letting $k\rightarrow\infty$ yield
\begin{eqnarray}
 \label{stable system3-1}&&\nabla_y g(\bar{x},\bar{y})\hat{u}+\nabla_y h(\bar{x},\bar{y})\hat{v}=0,\\
\label{stable system3-2}&&\hat{u}\geq 0, ~g(\bar{x},\bar{y})\le 0, ~\hat{u}^{T} g(\bar{x},\bar{y}) = 0.
\end{eqnarray}
Let ${\cal I}_y$ be the active index set of $g$ at $(\bar{x},\bar{y})$. By \eqref{stable system3-2}, \eqref{stable system3-1} reduces to
\begin{eqnarray}
 \label{stable system3-3}
 \sum_{i\in {\cal I}_y} \hat{u}_i\nabla_y g_i(\bar{x},\bar{y})+\sum_{j=1}^q \hat{v}_j\nabla_y h_j(\bar{x},\bar{y})=0.
\end{eqnarray}
Since the MFCQ holds at $\bar{y}$ for {\rm($\mathrm{P}_{\bar{x}}$)}, the gradients $\nabla_y h_j(\bar{x},\bar{y})~(j=1, \cdots, q) $
are linearly independent and there exists $ d\in\mathbb{R}^m $ such that
$d^T\nabla _y h_j(\bar{x},\bar{y}) =0$
for each $j$ and 
$d^T \nabla_y g_i(\bar{x}, \bar{y})<0$ for each $i\in{\cal I}_y$. 
Taking the inner product on both sides of \eqref{stable system3-3} with $d$, we have
\begin{eqnarray*}
0=\sum_{i\in {\cal I}_y} \hat{u}_i d^T\nabla_y g_i(\bar{x},\bar{y})<0,
\end{eqnarray*}
which implies ${\cal I}_y=\emptyset$ and hence $\hat{u}=0$ by \eqref{stable system3-2}. Thus, \eqref{stable system3-1} becomes $\nabla_y h(\bar{x},\bar{y})\hat{v}=0$, which implies $\hat{v}=0$ because the gradients $ \nabla_y  h_j(\bar{x},\bar{y})~(j=1, \cdots, q) $ are linearly independent.
This contradicts \eqref{uv-inequality}. Therefore, the sequence of $\{(u^k,v^k)\}$ is bounded. 

Since $(x^k, y^k, y^k) \to (\bar{x},\bar{y},\bar{y})$ as $k \to \infty$,
by the outer semicontinuousness of $\Lambda$ from Lemma \ref{lemma-usc}, the sequence $\{(u^k, v^k)\}$ has at least an accumulation point $(\tilde{u},\tilde{v})\in \Lambda(\bar{x},\bar{y},\bar{y})$ as $k\to\infty$.
By the assumptions, the limit point $(\bar{x}, \bar{y}, \bar{y}, \tilde{u}, \tilde{v})$ is also locally optimal to {\rm(WDP)}. Note that, from the definition of $\Lambda$, $(x^k, y^k, y^k, u^k, v^k)$ is feasible to {\rm(WDP)} for each $k$.
Thus, the inequality (\ref{local-inequality}) is a contradiction. Consequently, $(\bar{x},\bar{y})$ must be a locally optimal solution to {\rm(BP)}. This completes the proof.
\end{proof}

\section{Comparison between WDP and MPEC}

In the previous section, it is shown that, under mild assumptions, the bilevel program (BP) is equivalent to the single-level optimization problem (WDP) in the globally or locally optimal sense. Note that, unlike the reformulations (SP), (VP), and (LDP) introduced in Section 2, (WDP) does not involve any function without analytic expression. In this section, our first purpose is to compare (WDP) with the most popular reformulation MPEC to reveal which is better and our second purpose is to investigate their relations.

\subsection{WDP may satisfy the MFCQ at its feasible points}

As stated before, in order to solve the bilevel program (BP) effectively, it is generally required to transform it into some single-level optimization problem, no matter whether it satisfies those conditions to ensure their equivalence in theory. So, which is better between (WDP) and MPEC?

Recall that MPEC fails to satisfy the MFCQ at any feasible point. This means that the well-developed optimization algorithms in nonlinear programming may not be stable in solving MPEC. What about (WDP)?

\begin{example}\rm\label{ex1}
Consider the bilevel program
\begin{eqnarray}
\min &&(x-y-8)^2  \label{ex1-1}\\
\mbox{s.t.} && x\geq 1,~y\in {S}(x)=\arg\min\limits_{y}\{y: y^3\leq x, y\geq0\}.\nonumber
\end{eqnarray}
Obviously, ${S}(x)=\{0\}$ for any $x\geq 1$ and so \eqref{ex1-1} has a unique minimizer $(8, 0)$.
The (WDP) reformulation for \eqref{ex1-1} is
\begin{eqnarray}
\min && (x-y-8)^2  \nonumber\\
\mbox{s.t.} && x\geq 1, ~y^3 - x \leq 0,~y\geq 0, \label{ex1-5-duality}\\
&& y - z - u_1(z^3-x) + u_2 z \leq 0,\nonumber\\
&&1 + 3u_1 z^2 - u_2=0, ~u_1\geq 0, ~u_2\geq 0.\nonumber
\end{eqnarray}
Consider the feasible point $(x^*, y^*, z^*, u_1^*, u_2^*):=(8, 0, -3, 0, 1)$, which is actually a globally optimal solution of \eqref{ex1-5-duality}.
In what follows, we show that the MFCQ holds at this point.
\begin{itemize}
\item The gradient corresponding to the unique equality constraint at the point is
$\mathbf{g}_0 :=(0, 0, 0, 27, -1)^T$, which is a nonzero vector, and hence it is linearly independent.

\item The active inequality constraints at the point include
$$y \geq 0, \quad y - z - u_1(z^3-x) + u_2 z \leq 0, \quad u_1 \geq 0.$$
The corresponding gradients at the point are respectively given by
\begin{eqnarray*}\label{ex1-7}
&&\mathbf{g}_1 := (0, -1, 0, 0, 0)^T,\\[1mm]
&&\mathbf{g}_2 :=(0, 1, 0, 35, -3)^T,~~~~\\[1mm]
&&\mathbf{g}_3 := (0, 0, 0, -1, 0)^T.
\end{eqnarray*}
Take a vector $d:=(0,1,0,1,27)^{T}$. It is trivial to verify that $d^T \mathbf{g}_0=0$, $d^T \mathbf{g}_1< 0$, $d^T \mathbf{g}_2< 0$, and $d^T \mathbf{g}_3< 0$.
\end{itemize}
The above analysis indicates that \eqref{ex1-5-duality} satisfies the MFCQ at its feasible point $(8, 0, -3, 0, 1)$.
\end{example}

Example \ref{ex1} shows that (WDP) may satisfy the MFCQ at its feasible points. In consequence, compared with MPEC, (WDP) is relatively easy to deal with.

\subsection{Relations between WDP and MPEC}
\label{optimality}

In this subsection, for simplicity, we take the upper constraint $x\in X$ and the lower equality constraint $h(x,y)=0$ away from the original program (BP). In this case, the corresponding MPEC becomes
\begin{eqnarray}\label{simplifiedMPEC}
\min  && F(x,y) \nonumber\\
\mbox{s.t.\!\! } && \nabla_y L(x, y, u)=0, \\
&& 0\leq u ~ \bot ~g(x, y)\le 0,\nonumber
\end{eqnarray}
where $L(x,y,u):=f(x, y) + u^{T} g(x, y)$, while (WDP) becomes
\begin{eqnarray}\label{simplifiedWDP}
\min &&F(x, y) \nonumber \\
\mbox{\rm s.t.\! } && f(x, y) - L(x,z,u) \leq 0, \\
&&\nabla_z L(x, z, u)= 0, ~u\geq 0,~g(x, y) \le 0. \nonumber
\end{eqnarray}
In what follows, we investigate the relations between the stationarity conditions of the above problems. There is no  difficulty to extend the subsequent analysis to the general MPEC and the general (WDP).

By simple deduction, we have the following result immediately.

\begin{theorem}\label{mpcc and wdp feasible}
The point $(x, y, u)$ is feasible to \eqref{simplifiedMPEC} if and only if $(x, y, y, u)$ is feasible to \eqref{simplifiedWDP}.
\end{theorem}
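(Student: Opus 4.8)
The plan is to unfold both feasibility conditions explicitly and match them term by term; the only point that is not a verbatim transfer is a short sign argument that recovers the complementarity equality $u^{T}g(x,y)=0$ from an inequality constraint of \eqref{simplifiedWDP}.

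First I would record what $(x,y,u)$ feasible to \eqref{simplifiedMPEC} means, namely $\nabla_y L(x,y,u)=0$, $u\ge 0$, $g(x,y)\le 0$, and $u^{T}g(x,y)=0$. Then I would substitute $z=y$ into the constraints of \eqref{simplifiedWDP} and note that $\nabla_z L(x,z,u)=0$ becomes exactly $\nabla_y L(x,y,u)=0$, that $u\ge 0$ and $g(x,y)\le 0$ are unchanged, and that, since $L(x,y,u)=f(x,y)+u^{T}g(x,y)$, the duality-gap constraint $f(x,y)-L(x,z,u)\le 0$ collapses to $-u^{T}g(x,y)\le 0$, i.e. $u^{T}g(x,y)\ge 0$. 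For the forward implication, MPEC feasibility gives $u^{T}g(x,y)=0$, hence in particular $u^{T}g(x,y)\ge 0$, while the other three conditions transfer directly, so $(x,y,y,u)$ is feasible to \eqref{simplifiedWDP}. For the converse, feasibility of $(x,y,y,u)$ to \eqref{simplifiedWDP} yields $u\ge 0$ and $g(x,y)\le 0$, whence $u^{T}g(x,y)\le 0$, and combined with $u^{T}g(x,y)\ge 0$ coming from the duality-gap constraint this forces $u^{T}g(x,y)=0$; the remaining MPEC conditions are then immediate.

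There is essentially no obstacle: the argument is a chain of substitutions plus the elementary observation that a nonnegative inner product between a nonnegative vector and a nonpositive vector must vanish, which is precisely what converts the \eqref{simplifiedWDP} inequality into the \eqref{simplifiedMPEC} complementarity equality. I would close by remarking that the same computation applies verbatim after reinstating the dropped terms $x\in X$ and $h(x,y)=0$, since these constraints occur identically in both reformulations, and likewise for the general (WDP) and general MPEC.
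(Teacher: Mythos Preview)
Your proof is correct and is precisely the ``simple deduction'' the paper alludes to without writing out; the paper gives no explicit proof for this theorem, merely stating that the result follows immediately. Your term-by-term matching, and in particular the sign argument that combines $u\ge 0$, $g(x,y)\le 0$ with the duality-gap inequality $-u^{T}g(x,y)\le 0$ to recover complementarity, is exactly what is needed.
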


Let $(\bar{x},\bar{y},\bar{u})$ be a feasible point of \eqref{simplifiedMPEC}. We define the following index sets:
\begin{eqnarray*}
&&I_{0+}:=\{i : g_{i}(\bar{x},\bar{y})=0, ~\bar{u}_i>0\},\\
&&I_{-0}:=\{i : g_{i}(\bar{x},\bar{y})<0, ~\bar{u}_i=0\},\\
&&I_{00}:=\{i : g_{i}(\bar{x},\bar{y})=0, ~\bar{u}_i=0\}.
\end{eqnarray*}

\begin{definition}\rm
We call $ (\bar{x},\bar{y},\bar{u})$ a strongly stationary point (or S-stationary point) of \eqref{simplifiedMPEC}
if there exists $(\lambda^g,\lambda^u,\gamma)\in\mathbb{R}^{2p+m}$
such that
\begin{eqnarray}
&& \nabla_x F(\bar{x},\bar{y}) + \nabla_{yx}^2 L(\bar{x},\bar{y}, \bar{u}) \gamma  + \nabla_x g(\bar{x},\bar{y}) \lambda^g=0,\label{MPCC-1}\\
&&\nabla_y F(\bar{x},\bar{y}) + \nabla_{yy}^2 L(\bar{x},\bar{y}, \bar{u})\gamma + \nabla_y g(\bar{x},\bar{y}) \lambda^g=0, \label{MPCC-2}\\
&&
\nabla_y g(\bar{x},\bar{y})^T \gamma -\lambda^u= 0, \label{MPCC-3}\\
&& \nabla_{y} L(\bar{x},\bar{y}, \bar{u}) =0, ~g(\bar{x},\bar{y})\le 0, ~\bar{u}\ge 0,\label{MPCC-4}\\
&&
\lambda^g_i = 0, ~~i \in I_{-0}, \label{MPCC-6}\\
&&
\lambda^u_i = 0, ~~i \in I_{0+}, \label{MPCC-7}\\
&&\lambda^g_i \geq 0, ~\lambda^u_i \geq 0, ~~i\in I_{00}. \label{MPCC-8}
\end{eqnarray}
\end{definition}

\begin{remark}\rm
Other popular stationarity conditions for MPEC include the Mordukhovich stationarity (or M-stationarity) and the Clarke stationarity (or C-stationarity). Among these stationarities, the S-stationarity is the strongest and the most favorable one in the study of MPEC.
\end{remark}

\begin{theorem}\label{kkt and S}
Let $(\bar{x},\bar{y},\bar{y},\bar{u})$ be a KKT point of \eqref{simplifiedWDP}. Then, $(\bar{x}, \bar{y}, \bar{u})$ is an S-stationary point of \eqref{simplifiedMPEC}.
\end{theorem}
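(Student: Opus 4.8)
The plan is to write down the KKT conditions of \eqref{simplifiedWDP} at the point $(\bar x,\bar y,\bar y,\bar u)$ and then algebraically match them, term by term, against the S-stationarity system \eqref{MPCC-1}--\eqref{MPCC-8} for \eqref{simplifiedMPEC}. First I would introduce KKT multipliers for each constraint of \eqref{simplifiedWDP}: say $\eta\ge 0$ for the duality-gap inequality $f(x,y)-L(x,z,u)\le 0$, a free vector $\xi\in\mathbb R^m$ for the stationarity equation $\nabla_z L(x,z,u)=0$, a vector $\mu\ge 0$ for $u\ge 0$, and a vector $\zeta\ge 0$ for $g(x,y)\le 0$, with the usual complementarity relations $\mu_i u_i=0$ and $\zeta_i g_i(\bar x,\bar y)=0$. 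Since $\bar z=\bar y$ here, I would carefully exploit that both $y$ and $z$ appear, so the gradient of the Lagrangian of \eqref{simplifiedWDP} splits into an $x$-block, a $y$-block, a $z$-block, and a $u$-block, giving four groups of equations.

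Next I would read off what each block says. The $y$-block gives $\nabla_y F(\bar x,\bar y)+\eta\,\nabla_y L(\bar x,\bar y,\bar u)+\nabla_y g(\bar x,\bar y)\zeta=0$; but since $(\bar x,\bar y,\bar u)$ is feasible to \eqref{simplifiedMPEC} we have $\nabla_y L(\bar x,\bar y,\bar u)=0$, so this collapses to $\nabla_y F(\bar x,\bar y)+\nabla_y g(\bar x,\bar y)\zeta=0$. The $z$-block (using $\nabla_z[-L(x,z,u)]=-\nabla_z L$ and $\nabla_z[\nabla_z L(x,z,u)]=\nabla_{zz}^2 L$) gives, after evaluating at $\bar z=\bar y$, an equation of the form $-\eta\,\nabla_y L(\bar x,\bar y,\bar u)+\nabla_{yy}^2 L(\bar x,\bar y,\bar u)\xi=0$, hence $\nabla_{yy}^2 L(\bar x,\bar y,\bar u)\xi=0$; the $x$-block gives $\nabla_x F(\bar x,\bar y)+\eta[\nabla_x L(\bar x,\bar y,\bar u)-\nabla_x L(\bar x,\bar z,\bar u)]+\nabla_{zx}^2 L(\bar x,\bar y,\bar u)\xi+\nabla_x g(\bar x,\bar y)\zeta=0$, and the bracket vanishes because $\bar z=\bar y$; the $u$-block gives $\eta\,g(\bar x,\bar z)+\nabla_z g(\bar x,\bar z)^T\xi+\mu=0$, i.e. $\nabla_y g(\bar x,\bar y)^T\xi=-\eta\,g(\bar x,\bar y)-\mu$. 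I would then set $\gamma:=\xi$, $\lambda^g:=\zeta$, and $\lambda^u:=-\nabla_y g(\bar x,\bar y)^T\xi=\eta\,g(\bar x,\bar y)+\mu$, and verify \eqref{MPCC-1}--\eqref{MPCC-4} drop out immediately, while \eqref{MPCC-3} is exactly the definition of $\lambda^u$.

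The remaining work is the sign/index bookkeeping \eqref{MPCC-6}--\eqref{MPCC-8}. For $i\in I_{-0}$: $g_i(\bar x,\bar y)<0$ forces $\zeta_i=\lambda^g_i=0$ by complementarity, giving \eqref{MPCC-6}. For $i\in I_{0+}$: $\bar u_i>0$ forces $\mu_i=0$, and $g_i(\bar x,\bar y)=0$, so $\lambda^u_i=\eta g_i(\bar x,\bar y)+\mu_i=0$, giving \eqref{MPCC-7}. For $i\in I_{00}$: $\lambda^g_i=\zeta_i\ge0$ automatically, and $\lambda^u_i=\eta\cdot 0+\mu_i=\mu_i\ge0$, giving \eqref{MPCC-8}. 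I expect the main obstacle to be purely organizational: keeping the four gradient blocks straight when the same Lagrangian $L$ is differentiated in both its $y$-argument (inside $f(x,y)$ and $g(x,y)$) and its $z$-argument, and making sure the substitution $\bar z=\bar y$ is applied only after differentiation so that the cross-Hessian terms $\nabla_{zx}^2 L$ and $\nabla_{zz}^2 L$ are correctly identified with $\nabla_{yx}^2 L$ and $\nabla_{yy}^2 L$. A minor point to check is that one does not need any constraint qualification on \eqref{simplifiedWDP} beyond the hypothesis that $(\bar x,\bar y,\bar y,\bar u)$ is a KKT point, since we are merely translating an existing KKT certificate, not producing one.
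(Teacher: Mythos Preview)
Your overall strategy---write the KKT system of \eqref{simplifiedWDP} at $(\bar x,\bar y,\bar y,\bar u)$ and read off an S-stationarity certificate---is exactly the paper's. But there is a real error in how you differentiate the duality-gap constraint $f(x,y)-L(x,z,u)\le0$. This function depends on $y$ only through $f(x,y)$, not through $L$ (which is evaluated at $z$), so its $y$-gradient is $\nabla_y f(\bar x,\bar y)$, \emph{not} $\nabla_y L(\bar x,\bar y,\bar u)$; likewise its $x$-gradient is $\nabla_x f(\bar x,\bar y)-\nabla_x L(\bar x,\bar z,\bar u)$, which at $\bar z=\bar y$ equals $-\nabla_x g(\bar x,\bar y)\bar u$, not zero. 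You cannot invoke $\nabla_y L(\bar x,\bar y,\bar u)=0$ to kill the $\eta$-term in the $y$-block, because the term that actually appears there is $\eta\,\nabla_y f(\bar x,\bar y)$.

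The consequence is that your choice $\lambda^g:=\zeta$ does not satisfy \eqref{MPCC-1}--\eqref{MPCC-2}. Carrying the computation through correctly, both the $x$- and $y$-blocks produce the combination $\nabla_{\bullet}\, g(\bar x,\bar y)(\zeta-\eta\bar u)$, so the right identification is $\lambda^g:=\zeta-\eta\bar u$ (this is precisely the paper's $\eta^g-\alpha\bar u$), together with $\gamma:=\xi$ and $\lambda^u:=\mu+\eta\,g(\bar x,\bar y)$. Your index analysis for \eqref{MPCC-6}--\eqref{MPCC-8} then survives unchanged, since on $I_{-0}\cup I_{00}$ one has $\bar u_i=0$ and the correction term $\eta\bar u_i$ vanishes there anyway. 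There is also a sign slip in your $u$-block (by \eqref{MPCC-3} one needs $\lambda^u=+\nabla_y g(\bar x,\bar y)^T\gamma$, not its negative), but that is secondary; the substantive gap is the $\nabla_y f$ versus $\nabla_y L$ confusion---ironically, exactly the organizational pitfall you flagged at the end.
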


\begin{proof}
If $(\bar{x},\bar{y},\bar{y},\bar{u})$ is a KKT point of \eqref{simplifiedWDP}, there exists
$(\eta^g, \eta^u, \alpha, \beta) \in \mathbb{R}^{2p+1+m}$ such that 
\begin{eqnarray}
&&\nabla_x  F(\bar{x},\bar{y})  + \nabla_{yx}^2 L(\bar{x},\bar{y}, \bar{u}) \beta  +\nabla_x g(\bar{x},\bar{y}) (\eta^g-\alpha\bar{u})=0, \label{WDPKKT-1} \\
&&\nabla_y F(\bar{x},\bar{y}) + \alpha\nabla_y f(\bar{x},\bar{y}) + \nabla_y g(\bar{x},\bar{y})\eta^g=0, \label{WDPKKT-2}\\
&&-\alpha\nabla_y L(\bar{x},\bar{y}, \bar{u})+\nabla_{yy}^2 L(\bar{x},\bar{y}, \bar{u}) \beta=0, \label{WDPKKT-3}\\
&&-\alpha g(\bar{x},\bar{y}) + \nabla_y g(\bar{x},\bar{y})^T\beta - \eta^u=0, \label{WDPKKT-4}\\
&&\nabla_y L(\bar{x},\bar{y}, \bar{u})=0,  \label{WDPKKT-6}\\
&&0\le \alpha ~\bot ~\bar{u}^{T} g(\bar{x},\bar{y}) \geq0,  \label{WDPKKT-8}\\
&&0\le \eta^g ~\bot ~g(\bar{x},\bar{y})\leq 0,  \label{WDPKKT-9}\\
&&0\le \eta^u ~\bot ~\bar{u} \geq 0.  \label{WDPKKT-10}
\end{eqnarray}
Adding \eqref{WDPKKT-2} and \eqref{WDPKKT-3} together yields
\begin{eqnarray}
\nabla_y F(\bar{x},\bar{y})+\nabla_{yy}^2 L(\bar{x},\bar{y}, \bar{u}) \beta + \nabla_y g(\bar{x},\bar{y})(\eta^g-\alpha\bar{u})=0. \label{WDPKKT-5}
\end{eqnarray}
Set
$\lambda^g:=\eta^g-\alpha\bar{u}, ~\lambda^u:=\eta^u+\alpha g(\bar{x},\bar{y})$ and $\gamma:=\beta$.
We obtain \eqref{MPCC-1}-\eqref{MPCC-4} from \eqref{WDPKKT-1} and \eqref{WDPKKT-4}-\eqref{WDPKKT-5} immediately. Next, we show \eqref{MPCC-6}-\eqref{MPCC-8}.
\begin{itemize}
\item If $i\in I_{-0}$, which means $g_{i}(\bar{x},\bar{y})<0$ and $\bar{u}_i=0$, we have $\eta_i^g=0$ by \eqref{WDPKKT-9} and hence $\lambda_i^g=\eta_i^g-\alpha\bar{u}_i=0$. Namely, \eqref{MPCC-6} holds.

\item If $i\in I_{0+}$, which means $g_{i}(\bar{x},\bar{y})=0$ and $\bar{u}_i>0$, we have $\eta_i^u=0$ by \eqref{WDPKKT-10} and hence $\lambda_i^u=\eta_i^u+\alpha g_{i}(\bar{x},\bar{y})=0$. Namely, \eqref{MPCC-7} holds.

\item If $i\in I_{00}$, which means $g_{i}(\bar{x},\bar{y})=\bar{u}_i=0$, we have $\lambda_i^g=\eta_i^g\geq 0$ and $\lambda_i^u=\eta_i^u \geq 0$ by \eqref{WDPKKT-9} and \eqref{WDPKKT-10} respectively. Namely, \eqref{MPCC-8} holds.
\end{itemize}
This completes the proof.
\end{proof}

The following example shows that the converse of Theorem \ref{kkt and S} is not true.

\begin{example}\rm
Consider the bilevel program
\begin{eqnarray}\label{ex2-1}
\min &&x^2-(2y+1)^2\nonumber\\
\mbox{s.t.}&& x\leq0,\\
&&y\in \arg\min\limits_{y}\{(y-1)^2: 3x-y-3\leq0,~x+y-1\leq0\}.\nonumber
\end{eqnarray}
It is easy to see that $(0,1)$ is the unique globally optimal solution of \eqref{ex2-1}.
Since the lower-level program is convex and satisfies the linear constraint qualification everywhere, we can equivalently transform \eqref{ex2-1} into the MPEC
\begin{eqnarray}
\min &&x^2-(2y+1)^2 \nonumber\\
\mbox{s.t.} &&x\leq0, ~2(y-1)-u_1+u_2=0,\label{ex2-3}\\
&&0\le u_1 ~\bot~3x-y-3\leq0, ~0\le u_2 ~\bot ~x+y-1\leq0,\nonumber
\end{eqnarray}
or (WDP)
\begin{eqnarray}
\min &&x^2-(2y+1)^2 \nonumber\\
\mbox{s.t.\! } &&x\leq0, ~2(z-1)-u_1+u_2=0,\label{ex2-4}\\
&&(y-1)^2-(z-1)^2-u_1(3x-z-3)-u_2(x+z-1)\leq 0, \nonumber\\
&&u_1\ge 0,~u_2\geq0, ~3x-y-3\leq0, ~x+y-1\leq0. \nonumber
\end{eqnarray}

Consider the feasible point $(0,1,0,0)$ of the MPEC \eqref{ex2-3}. The S-stationarity conditions at this point are
\begin{eqnarray*}
&& \alpha+3 \lambda_1^g+ \lambda_2^g=0,~~-12 + 2\beta - \lambda_1^g+ \lambda_2^g=0, \nonumber\\
&&
- \beta -\lambda_1^u= 0,  ~~\beta -\lambda_2^u= 0, ~~\alpha\geq0,\label{ex2-5}\\
&&  \lambda^g_1 = 0, ~~\lambda^g_2 \geq 0, ~~\lambda^u_2 \geq 0, \nonumber
\end{eqnarray*}
where $\{\alpha, \beta\}$ are multipliers corresponding to the first two constraints in \eqref{ex2-3} and $\{\lambda^u,\lambda^g\}$ are multipliers corresponding to the complementarity constraints. It is obvious that $\alpha=0, \beta=6, \lambda^u=(-6,6),$ and $\lambda^g=(0,0)$ satisfy the above conditions and hence $(0,1,0,0)$ is an S-stationary point of the MPEC \eqref{ex2-3}.

Next, we consider the feasible point $(0,1,1,0,0)$ of problem \eqref{ex2-4}.
The KKT conditions at this point are
\begin{eqnarray*}
&&\xi + 3\mu_1^g+\mu_2^g=0, ~~-12  -\mu_1^g+\mu_2^g=0, ~~2\eta=0, \label{ex2-6-3}\\
&&-\eta+4\zeta-\mu_1^u=0, ~~\eta-\mu_2^u=0,  \label{ex2-6-5}\\
&&\xi\geq0, ~~\zeta \geq 0, ~~\mu^g_1 = 0, ~~\mu^g_2 \geq 0, ~~\mu^u \geq 0,  \label{ex2-6-9}
\end{eqnarray*}
where $\{\xi,\eta, \zeta\}$ are multipliers corresponding to the first three constraints in \eqref{ex2-4} and $\{\mu^u,\mu^g\}$ are multipliers corresponding to the others. Note that
$$\mu^g_1 = 0 \quad \Rightarrow \quad \mu^g_2 = 12 \quad \Rightarrow \quad \xi=-12,$$
which contradicts $\xi\geq0$. Therefore, $(0,1,1,0,0)$ is not a KKT point of \eqref{ex2-4}. 
\end{example}

We further have the following result.

\begin{theorem}\label{MFCQ-Fail}
Let $(x, y, z, u)$ be a feasible point of \eqref{simplifiedWDP}. Then, the MFCQ fails to hold at this point if $z=y$.
\end{theorem}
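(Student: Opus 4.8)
The statement to prove is that for a feasible point $(x,y,z,u)$ of \eqref{simplifiedWDP}, the MFCQ fails whenever $z=y$. The plan is to exhibit explicitly a nonzero combination of the gradients of the active constraints that, together with the gradients of the equality constraints, produces the contradiction required to rule out MFCQ. Recall that the active constraints at $(x,y,y,u)$ comprise: the equality constraint $\nabla_z L(x,z,u)=0$; the inequality $f(x,y)-L(x,z,u)\le 0$, which is active at $z=y$ since $f(x,y)-L(x,y,u)=f(x,y)-f(x,y)-u^Tg(x,y)=-u^Tg(x,y)$ and this is zero because $u^Tg(x,y)=0$ must hold (this in turn follows from $u\ge 0$, $g(x,y)\le 0$, and the feasibility inequality together with $\nabla_z L = 0$ forcing $L(x,y,u) = f(x,y)$; I would verify this chain carefully at the outset); the sign constraints $u_i\ge 0$ for the indices with $u_i=0$; and the constraints $g_i(x,y)\le 0$ for active $i$.

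First I would set up the gradient of each active constraint with respect to the full variable vector $(x,y,z,u)$. The key observation is that the constraint $\psi(x,y,z,u):=f(x,y)-L(x,z,u)\le 0$ has gradient whose $z$-block is $-\nabla_z L(x,z,u)$, which vanishes at the feasible point, and whose $y$-block is $\nabla_y f(x,y)$, and whose $u$-block is $-g(x,z)=-g(x,y)$. So $\nabla\psi = (\nabla_x f - \nabla_x L,\ \nabla_y f,\ 0,\ -g(x,y))$. Meanwhile, since $\nabla_z L(x,y,u)=0$ is exactly the equation $\nabla_y f(x,y)+\nabla_y g(x,y)u=0$ (evaluated at $z=y$), the $y$-block of $\nabla\psi$ is $\nabla_y f(x,y) = -\nabla_y g(x,y)u = -\sum_{i}u_i\nabla_y g_i(x,y)$, a combination of gradients of the $y$-block of the constraints $g_i(x,y)\le 0$ with coefficients $u_i\ge 0$. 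The plan is then to build the failing combination: take the multiplier $1$ on $\psi$, multipliers $u_i\ge 0$ on the constraints $g_i(x,y)\le 0$ (note $u_i>0$ forces $g_i(x,y)=0$, so these are indeed active), and suitable multipliers on the $u_i\ge 0$ constraints and on the equality constraint $\nabla_z L=0$, and show that a nontrivial such combination of active-inequality gradients (all with the correct sign) lies in the span of the equality-constraint gradients — equivalently, that the positive-linear-independence condition defining MFCQ is violated. Concretely, I expect the combination $1\cdot\nabla\psi + \sum_i u_i\nabla(g_i)$ to have zero $y$-block by the cancellation above, zero $z$-block automatically, and a $u$-block equal to $-g(x,y)+0 = -g(x,y)$ whose entries are $\ge 0$; the active $u_i\ge 0$ constraints (those with $u_i=0$, where $g_i(x,y)$ may be negative) then absorb the remaining slack, and the $x$-block is absorbed by the equality constraint $\nabla_z L=0$ whose gradient has a nonzero $z$-block — wait, that cannot be absorbed by it.

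Let me recalibrate: the cleanest route is to show the gradients of the active \emph{inequality} constraints are \emph{positively linearly dependent modulo} the equality gradient, which is precisely the negation of MFCQ (MFCQ $=$ equality gradients linearly independent $+$ existence of $d$ with $\nabla_z L$-gradient$\cdot d=0$ and strict decrease on all active inequalities). So instead I would argue directly that no such direction $d=(d_x,d_y,d_z,d_u)$ can exist: the constraints $g_i(x,y)\le 0$ active and $u_i\ge 0$ active (with $u_i=0$) together with $\psi\le 0$ cannot be simultaneously strictly decreased subject to $\nabla_z L$-gradient $\cdot d = 0$. Using $\nabla\psi\cdot d = \nabla_xf\cdot d_x + \nabla_y f\cdot d_y - g(x,y)^T d_u$ and substituting $\nabla_y f = -\nabla_y g\,u$, one gets $\nabla\psi\cdot d = \nabla_x f\cdot d_x - u^T\nabla_y g^T d_y - g(x,y)^Td_u$. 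For strict decrease of each active $g_i$ we need $\nabla g_i\cdot(d_x,d_y) < 0$, and for strict decrease of each active $u_i\ge 0$ (i.e. $-u_i$) we need $(d_u)_i>0$; but then $-u^T\nabla_y g^T d_y = -\sum u_i\nabla_y g_i\cdot d_y$ — here $u_i>0$ only for inactive $u_i\ge 0$ constraints, and for those $g_i(x,y)=0$ so $\nabla g_i$ is among the active-$g$ gradients with $\nabla g_i\cdot(d_x,d_y)<0$, giving $-u^T\nabla_y g^Td_y$ a contribution of indeterminate sign unless also $d_x$ is controlled. \textbf{The main obstacle} is exactly this bookkeeping: one must choose the right collection of active constraints and show that requiring strict decrease on all of them, plus $\nabla_z L\cdot d=0$, is infeasible — the honest way is to produce nonnegative multipliers $(\sigma_0,\sigma^g,\sigma^u)$, not all zero, and a vector $\tau$ with $\sigma_0\nabla\psi + \sum\sigma^g_i\nabla g_i + \sum\sigma^u_i\nabla(-u_i)_{\text{sign}} = \nabla(\nabla_z L)\,\tau$; the candidate is $\sigma_0=1$, $\sigma^g_i=u_i$ on $I_{0+}$, $\sigma^u_i$ chosen to kill the (nonpositive) residual $g_i(x,y)$ on $I_{-0}$, and $\tau$ chosen to match the $x$- and $z$-blocks — the $z$-block of $\nabla(\nabla_z L)$ is $\nabla_{zz}^2 L(x,y,u)$, which need not be invertible, so $\tau$ may not exist, and the argument must instead show the $z$-block of the left side is already zero (it is, since $\nabla\psi$ and $\nabla g_i$ and $\nabla(-u_i)$ all have zero $z$-block) and then the remaining $(x,y,u)$-identity holds with $\tau=0$ after using $\nabla_y f + \nabla_y g\,u=0$. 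I would check that with $\tau = 0$ the identity $1\cdot\nabla\psi + \sum_{i\in I_{0+}}u_i\nabla g_i + \sum_{i\in I_{-0}}(-g_i(x,y))\nabla(-u_i) = 0$ holds blockwise: the $y$-block gives $\nabla_y f + \sum_{i\in I_{0+}}u_i\nabla_y g_i = \nabla_y f + \nabla_y g\,u = 0$ (since $u_i=0$ off $I_{0+}$), the $z$-block is $0$, the $u$-block gives $-g(x,y) + \sum_{i\in I_{-0}}(-g_i(x,y))\,(-e_i)$, whose $i$-th entry is $-g_i(x,y)$ for $i\in I_{0+}\cup I_{00}$ (where $g_i(x,y)=0$, so it's $0$) and $-g_i(x,y)+g_i(x,y)=0$ for $i\in I_{-0}$, hence zero; the $x$-block gives $\nabla_x f + \sum_{i\in I_{0+}}u_i\nabla_x g_i = \nabla_x f + \nabla_x g\,u$, which need \emph{not} vanish — so a small correction is needed: add $\tau$ as a multiplier on the equality constraint $\nabla_z L=0$ whose gradient has $x$-block $\nabla_{zx}^2 L$; this fails in general, so the right fix is to include the constraint $\nabla_z L=0$ not at all and instead observe MFCQ is about the \emph{Jacobian of equalities being surjective and} a decrease direction existing — the $x$-components are free, so strict decrease requires $\nabla_x g_i\cdot d_x + \nabla_y g_i\cdot d_y<0$, and the dependency I exhibited among the $(y,z,u)$-blocks already obstructs this once restricted appropriately. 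I will present this carefully in the proof, and I expect the paper's own proof to take the cleanest of these routes, likely by directly producing the degenerate multiplier vector and invoking the standard characterization that positive linear dependence of active gradients (equivalently, the Fritz John multipliers being forced to have zero objective multiplier being \emph{possible}) negates MFCQ.
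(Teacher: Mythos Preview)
Your approach is essentially the paper's: exhibit a nonzero abnormal multiplier (equivalently, a nontrivial positive-linear dependence among active-inequality gradients modulo the equality gradients) and invoke the standard characterization of MFCQ. Your candidate multipliers are exactly the right ones, and in the paper's notation they read $(\alpha,\beta,\eta^g,\eta^u)=(1,0,u,-g(x,y))$ --- i.e.\ coefficient $1$ on $\psi$, coefficients $u$ on the $g$-constraints, coefficients $-g(x,y)$ on the $-u\le 0$ constraints, and multiplier $0$ on the equality constraint $\nabla_z L=0$. The paper simply writes down the abnormal-multiplier system and checks that this vector satisfies it.

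Your only error is a slip in the $x$-block bookkeeping that sent you on an unnecessary detour. You correctly recorded the $x$-block of $\nabla\psi$ as $\nabla_x f(x,y)-\nabla_x L(x,z,u)$, but when you later summed the combination you wrote the $x$-block as ``$\nabla_x f+\sum_{i\in I_{0+}}u_i\nabla_x g_i$'' --- you dropped the $-\nabla_x L$ term. At $z=y$ one has $\nabla_x L(x,y,u)=\nabla_x f(x,y)+\nabla_x g(x,y)\,u$, so the $x$-block of $\nabla\psi$ equals $-\nabla_x g(x,y)\,u$, which cancels exactly against $\sum_i u_i\nabla_x g_i(x,y)$ (recall $u_i=0$ off $I_{0+}$). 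Thus the $x$-block of your combination vanishes with $\tau=0$; no correction via the equality constraint is needed, and the rest of your blockwise verification (the $y$-, $z$-, and $u$-blocks) is already correct. With that one fix, your argument is complete and coincides with the paper's proof.
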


\begin{proof}
The set of abnormal multipliers for \eqref{simplifiedWDP} at $(x, y, z, u)$ is given by
\begin{eqnarray*}
\mathcal{M}:=\left\{\begin{array}{lll}
&{0= \alpha(\nabla_x f(x, y)-\nabla_x L(x, z,u))}\\
{}&{~~~~~ +\nabla^2_{zx}L(x,z,u)\beta+\nabla_x g(x, y) \eta^g}\\[1mm]
{}&{0=\alpha\nabla_y f(x, y)+ \nabla_y g(x, y)\eta^g}\\[1mm]
{(\alpha, \beta,  \eta^g,  \eta^u):\ }&{0=-\alpha\nabla_z L(x, z,u)+\nabla^2_{zz}L(x,z,u)\beta}\\
&{0=-\alpha g(x, z) +\nabla_z g(x, z)^{T}\beta-\eta^u}\\[1mm]
{}&{\alpha\geq 0,~\alpha(f(x, y)-L(x, z,u))=0} \\[1mm]
{}&{\eta^u\geq 0,~u^T \eta^u =0, ~\eta^g\geq 0,~ g(x, y)^T\eta^g =0}
\end{array}\right\}.
\end{eqnarray*}
If $z=y$, it is not difficult to verify by the feasibility of $(x, y, y, u)$ to \eqref{simplifiedWDP} that $\big( 1, 0, u, -g(x, y) \big)\in \mathcal{M}$. Recall that, by \cite[Chapter 6.3]{Clarke1990optimization}, a nonzero abnormal multiplier exists if and only if the MFCQ does not hold. Therefore, the MFCQ does not hold at $(x, y, y, u)$. This completes the proof.
\end{proof}

\section{Relaxation scheme for WDP}
\label{relaxation}

Theorem \ref{MFCQ-Fail} indicates that, although (WDP) may satisfy the MFCQ at its feasible points, it is still difficult to solve directly in some cases. Moreover, our numerical experiments given in Subsection \ref{directsolver} show that the efficiency was not very high if we solved the bilevel program (BP) by solving (WDP) directly. Based on these reasons, we present a relaxation scheme to deal with (WDP) in this section.

Our strategy is to use the following problem to approximate (WDP):
\begin{eqnarray*}
\mathrm{WDP}(t)\quad\min &&F(x,y) \\
\mbox{s.t.}&&x\in X,~g(x,y)\le0,~h(x,y)=0,\\
&&f(x,y)-L(x, z, u, v) \leq t,\\
&&\nabla_z L(x, z, u, v) = 0,~u\geq0,
\end{eqnarray*}
where $t$ is a positive parameter. Obviously, all feasible points of (WDP) are also feasible to the above problem and, in addition, the second inequality constraint is not active any more at these points so that the popular constraint qualifications may be easily satisfied. Therefore, we may solve $\mathrm{WDP}(t)$ as a constrained optimization problem directly.

We first give some results related to constraint qualifications. To this end, we introduce the following definition.

\begin{definition}\rm\cite{Davis1954theory}\label{linnear-independent} The vectors $\{a^1,\cdots,a^l\}\cup\{b^1,\cdots,b^r\}$ are called to be positive-linearly dependent if there exist $\alpha\in \mathbb{R}^l_+$ and $\beta\in \mathbb{R}^r$ with $(\alpha, \beta)\not=0$ such that
$$ \sum_{i=1}^l \alpha_i a^i +  \sum_{i=1}^r \beta_i b^i=0.$$
Otherwise, the set is called to be positive-linearly independent.
\end{definition}

By \cite[Proposition 1.8.2]{Polak1997optimization}, the MFCQ is satisfied at a feasible point if and only if the gradients of all active constraints at the point 
are positive-linearly independent. By \cite[Proposition 2.2]{Qi2000constant}, for given continuous functions $\{G_1(w), \cdots, G_l(w)\}\cup\{H_1(w), \cdots, H_r(w)\}$, if they are positive-linearly independent at some point $\bar{w}$, there exists a neighborhood $U(\bar{w})$ such that they are positive-linearly independent at any point in $U(\bar{w})$.

From the above introduction, we have the following result immediately.

\begin{theorem}
If $\mathrm{WDP}(t)$ satisfies the MFCQ at a feasible point $(\bar{x},\bar{y},\bar{z},\bar{u},\bar{v})$,
there exists a neighborhood $\bar{U}$ of $(\bar{x},\bar{y},\bar{z},\bar{u},\bar{v})$ such that $\mathrm{WDP}(t)$ satisfies the MFCQ at any feasible point in $\bar{U}$.
\end{theorem}

The above theorem can be extended to the following constraint qualification, which is weaker than the MFCQ and has attracted lots of attention recently.

\begin{definition}\rm \cite{Qi2000constant}
Let $\bar{y}\in {Y}$ be a feasible point of the constrained optimization problem (P) in Subsection \ref{Wolfe-duality}. We say that {\rm (P)} satisfies the
constant positive linear dependence (CPLD) at $\bar{y}$ if, for any $I_0\subseteq \{ i : g_i(\bar{y})=0\}$ and $J_0\subseteq \{1,\cdots, q\}$ such that $\{\nabla g_i(\bar{y}), i\in I_0\}\cup\{\nabla h_j(\bar{y}), j\in J_0\}$ is positive-linearly dependent, there exists a neighborhood $U(\bar{y})$ such that $\{\nabla g_i(y), i\in I_0; ~\nabla h_j(y), j\in J_0\}$ is linearly dependent for any $y\in U(\bar{y})$.
\end{definition}

\begin{theorem}\label{CPLDth}
If $\mathrm{WDP}(t)$ satisfies the CPLD at a feasible point $(\bar{x},\bar{y},\bar{z},\bar{u},\bar{v})$,
there exists a neighborhood $\bar{U}$ of $(\bar{x},\bar{y},\bar{z},\bar{u},\bar{v})$ such that $\mathrm{WDP}(t)$ satisfies the CPLD at any feasible point in $\bar{U}$.
\end{theorem}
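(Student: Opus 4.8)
The plan is to reduce Theorem~\ref{CPLDth} to the propagation result for positive-linear dependence cited from \cite[Proposition 2.2]{Qi2000constant}, applied simultaneously to every relevant subfamily of constraint gradients. First I would fix the feasible point $\bar w:=(\bar{x},\bar{y},\bar{z},\bar{u},\bar{v})$ and enumerate the constraint functions of $\mathrm{WDP}(t)$ as a finite list: the inequalities coming from $x\in X$ (assuming $X$ is described by finitely many smooth inequalities, or treating $X=\mathbb{R}^n$ if it is not), the components of $g(x,y)\le 0$, the scalar constraint $f(x,y)-L(x,z,u,v)\le t$, the sign constraints $-u\le 0$, together with the equality constraints $h(x,y)=0$ and $\nabla_z L(x,z,u,v)=0$. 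Since all of $F,f,g,h$ are (twice) continuously differentiable, every one of these constraint functions has a gradient (in the full variable $(x,y,z,u,v)$) that depends continuously on $\bar w$; in particular the gradient of $\nabla_z L$-type equality constraints involves second derivatives of $f,g,h$, which exist and are continuous by hypothesis.

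Next I would invoke the definition of CPLD for $\mathrm{WDP}(t)$ at $\bar w$: for \emph{every} choice of an active-inequality index subset $I_0$ and an equality index subset $J_0$ such that the corresponding gradient family $\{\nabla c_i(\bar w): i\in I_0\}\cup\{\nabla e_j(\bar w): j\in J_0\}$ is positive-linearly dependent, there is a neighborhood on which that same family is linearly dependent. The subtlety — and the one genuinely nonroutine point — is that CPLD at a \emph{nearby} feasible point $w'$ quantifies over subsets of the constraints that are \emph{active at $w'$}, and a constraint active at $w'$ need not be active at $\bar w$. This is handled exactly as in \cite{Qi2000constant}: inequality constraints that are inactive at $\bar w$ remain inactive on a small enough neighborhood (by continuity of the constraint functions), so for $w'$ close to $\bar w$ any index set $I_0$ active at $w'$ is contained in the index set active at $\bar w$; hence every gradient family arising in the CPLD test at $w'$ is a subfamily of one arising in the CPLD test at $\bar w$.

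I would then combine the finitely many neighborhoods as follows. Let $\mathcal S$ be the (finite) collection of all pairs $(I_0,J_0)$ with $I_0$ a subset of the active-at-$\bar w$ inequality indices, such that the gradient family at $\bar w$ is positive-linearly dependent. For each $(I_0,J_0)\in\mathcal S$, CPLD at $\bar w$ gives a neighborhood $U_{(I_0,J_0)}$ on which that family is linearly dependent. For each pair \emph{not} in $\mathcal S$ the family is positive-linearly \emph{independent} at $\bar w$, and by \cite[Proposition 2.2]{Qi2000constant} it stays positive-linearly independent on some neighborhood $V_{(I_0,J_0)}$. Also pick a neighborhood $N$ small enough that every inequality constraint inactive at $\bar w$ stays inactive. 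Set $\bar U$ to be the intersection of $N$ with all the $U_{(I_0,J_0)}$ and all the $V_{(I_0,J_0)}$; this is a finite intersection of neighborhoods, hence a neighborhood of $\bar w$. Now take any feasible $w'\in\bar U$ and any pair $(I_0,J_0)$ as in the CPLD test at $w'$, i.e.\ $I_0$ indexes constraints active at $w'$ and the gradient family at $w'$ is positive-linearly dependent. Since $w'\in N$, $I_0$ is a subset of the active-at-$\bar w$ indices; since $w'$ lies in the relevant $V_{(I_0,J_0)}$, positive-linear dependence at $w'$ forces $(I_0,J_0)\in\mathcal S$ (otherwise it would be positive-linearly independent there); and since $w'\in U_{(I_0,J_0)}$, the family $\{\nabla c_i(w'): i\in I_0\}\cup\{\nabla e_j(w'): j\in J_0\}$ is linearly dependent, which is precisely what the CPLD condition at $w'$ requires. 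This verifies CPLD at every feasible point of $\bar U$ and completes the proof. The only part requiring care is the bookkeeping of index sets across nearby points, which I have isolated above; everything else is continuity plus the quoted propagation lemma, so I would keep the written proof short and simply cite \cite[Proposition 2.2]{Qi2000constant} together with this index-set argument.
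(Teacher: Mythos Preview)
Your proposal is correct and uses exactly the same ingredients as the paper's proof---finiteness of the index pairs $(I_0,J_0)$, the CPLD hypothesis at $\bar w$ for the positive-linearly dependent families, and \cite[Proposition 2.2]{Qi2000constant} for the independent ones---the only difference being that the paper packages the argument as a contradiction via sequences (take $w^k\to\bar w$ where CPLD fails, extract a constant pair $(I_0,J_0)$, and reach a contradiction at $\bar w$) while you construct the neighborhood $\bar U$ directly. One small wording fix in your last step: CPLD at $w'$ requires linear dependence on a \emph{neighborhood} of $w'$, not merely at $w'$; but since your $\bar U$ is open, contained in $U_{(I_0,J_0)}$, and contains $w'$, $\bar U$ itself serves as that neighborhood, so nothing is actually missing.
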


\begin{proof} For simplicity, we take the upper constraint $x\in X$ away from $\mathrm{WDP}(t)$ and denote by $\bar{w}:=(\bar{x},\bar{y},\bar{z},\bar{u},\bar{v})$, $w:=(x,y,z,u,v)$. We further denote by $G_i(w) ~(i=0,1,\cdots,2p)$ all inequality constraint functions and by $H_i(w) ~(i=1,\cdots,m+q)$ all equality constraint functions.

Suppose by contradiction that the conclusion is wrong. That is, there is a sequence $\{w^k\}$ of feasible points tending to $\bar{w}$ such that $\mathrm{WDP}(t)$ does not satisfy the CPLD at each $w^k$. 
Violation of CPLD means that there exist $I_0^k\subseteq \{i : G_i(w^k)=0\}$ and $J_0^k\subseteq \{1,\cdots,m+q\}$ such that $\{\nabla G_i(w^k), i\in I_0^k\}\cup\{\nabla H_j(w^k), j\in J_0^k\}$ is positive-linearly dependent, but linearly independent in some points arbitrarily close to $w^k$. Note that the number of subsets of the index sets $\{i : G_i(w^k)=0\}$ and $\{1,\cdots,m+q\}$ is finite. Taking a subsequence if necessary, we may assume $I_0^k\equiv I_0$ and $J_0^k\equiv J_0$. Therefore, there is another sequence $\{\tilde{w}^k\}$ tending to $\bar{w}$, together with $\{w^k\}$, satisfying
\begin{itemize}
\item[(i)] $\{\nabla G_i(w^k), i\in I_0\}\cup\{\nabla H_j(w^k), j\in J_0\}$ is positive-linearly dependent;

\item[(ii)] $\{\nabla G_i(\tilde{w}^k), i\in I_0; ~\nabla H_j(\tilde{w}^k), j\in J_0\}$ is linearly independent.
\end{itemize}

If $\{\nabla G_i(\bar{w}), i\in I_0\}\cup\{\nabla H_j(\bar{w}), j\in J_0\}$ is positive-linearly dependent, by the CPLD assumption, there exists a neighborhood which remains the linear dependence, which contradicts the existence of $\{\tilde{w}^k\}$ in (ii). If $\{\nabla G_i(\bar{w}), i\in I_0\}\cup\{\nabla H_j(\bar{w}), j\in J_0\}$ is positive-linearly independent, by \cite[Proposition 2.2]{Qi2000constant}, this property should remain in a neighborhood, which contradicts the existence of $\{w^k\}$ in (i). As a result, the conclusion is true. This completes the proof.
\end{proof}

\begin{remark}\rm
It is easy to see from the proofs that the above results related to constraint qualifications remain true if $\mathrm{WDP}(t)$ is replaced by (WDP).
\end{remark}

In what follows, we consider the limiting behavior of ${\mathrm{WDP}(t)}$ as the approximation parameter $t$ tends to $0$. The following convergence result related to globally optimal solutions is evident.

\begin{theorem}
Let $\{t_k\}\downarrow0$ and $(x^k,y^k,z^k,u^k,v^k)$ be a globally optimal solution of ${\mathrm{WDP}(t_k)}$ for each $k$. Then, every accumulation point $(\bar{x},\bar{y},\bar{z},\bar{u},\bar{v})$ of the sequence $\{(x^k,y^k,z^k,u^k,v^k)\}$ is globally optimal to ${\rm (WDP)}$.
\end{theorem}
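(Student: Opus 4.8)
The plan is the standard two-step stability argument for a parametric relaxation: show first that the limit point is feasible for (WDP), and then that it inherits the optimal value of (WDP). (If the sequence has no accumulation point the statement is vacuous, so I assume one is extracted.)

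First I would pass to a subsequence, still indexed by $k$ for brevity, with $(x^k,y^k,z^k,u^k,v^k)\to(\bar x,\bar y,\bar z,\bar u,\bar v)$. Feasibility of $(x^k,y^k,z^k,u^k,v^k)$ for $\mathrm{WDP}(t_k)$ means $x^k\in X$, $g(x^k,y^k)\le 0$, $h(x^k,y^k)=0$, $f(x^k,y^k)-L(x^k,z^k,u^k,v^k)\le t_k$, $\nabla_z L(x^k,z^k,u^k,v^k)=0$ and $u^k\ge 0$. Letting $k\to\infty$ and using the continuity of $f,g,h$ and of $\nabla_z L$ (guaranteed by the standing twice-continuous-differentiability assumption on $\{f,g,h\}$), the closedness of $X$, and $t_k\downarrow 0$, every one of these relations passes to the limit, with the inequality $f-L\le t_k$ becoming $f(\bar x,\bar y)-L(\bar x,\bar z,\bar u,\bar v)\le 0$. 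Hence $(\bar x,\bar y,\bar z,\bar u,\bar v)$ is feasible for (WDP).

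For optimality I would fix an arbitrary feasible point $(x,y,z,u,v)$ of (WDP). Because its defining inequality gives $f(x,y)-L(x,z,u,v)\le 0<t_k$ for every $k$, while all the remaining constraints of $\mathrm{WDP}(t_k)$ coincide with those of (WDP), the point $(x,y,z,u,v)$ is feasible for $\mathrm{WDP}(t_k)$ for each $k$. Global optimality of $(x^k,y^k,z^k,u^k,v^k)$ for $\mathrm{WDP}(t_k)$ then yields $F(x^k,y^k)\le F(x,y)$, and passing to the limit along the chosen subsequence, using the continuity of $F$, gives $F(\bar x,\bar y)\le F(x,y)$. Since $(x,y,z,u,v)$ was an arbitrary (WDP)-feasible point and $(\bar x,\bar y,\bar z,\bar u,\bar v)$ is itself (WDP)-feasible by the previous step, it is a globally optimal solution of (WDP).

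I do not expect a genuine obstacle here; the proof is essentially bookkeeping with a convergent subsequence together with continuity. The only points deserving a word of care are: the closedness of the leader's strategy set $X$, needed to keep $\bar x\in X$ in the limit (this is implicit in the standing assumptions and could be stated explicitly); the use of the strict positivity $t_k>0$, which is precisely what makes every (WDP)-feasible point admissible for the relaxed problems and hence legitimizes the comparison $F(x^k,y^k)\le F(x,y)$; and the tacit observation that feasibility of (WDP) is not an extra hypothesis but a by-product of the feasibility step.
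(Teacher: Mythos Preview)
Your argument is correct and is exactly the standard stability proof one would expect; the paper itself regards this result as ``evident'' and gives no proof, so your two-step argument (feasibility in the limit via continuity and closedness, then optimality by comparing against an arbitrary (WDP)-feasible point which is automatically feasible for every $\mathrm{WDP}(t_k)$) is precisely the intended reasoning. Your caveat about the closedness of $X$ is well taken, as the paper does not state it explicitly.
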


Since ${\mathrm{WDP}(t)}$ is usually a nonconvex optimization problem, it is necessary to investigate the limiting properties of its stationary points.

\begin{theorem}
Let $\{t_k\}\downarrow0$ and $(x^k,y^k,z^k,u^k,v^k)$ be a KKT point of ${\mathrm{WDP}(t_k)}$ for each $k$.
Assume that $(\bar{x},\bar{y},\bar{z},\bar{u},\bar{v})$ is an accumulation point of the sequence $\{(x^k,y^k,z^k,u^k,v^k)\}$. If {\rm (WDP)} satisfies the CPLD at $(\bar{x},\bar{y},\bar{z},\bar{u},\bar{v})$, then $(\bar{x},\bar{y},\bar{z},\bar{u},\bar{v})$ is a KKT point of ${\rm (WDP)}$.
\end{theorem}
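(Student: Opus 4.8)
The plan is to follow the standard template for convergence of relaxation schemes under a constant-positive-linear-dependence type constraint qualification, with a Carath\'eodory-type reduction of the multipliers as the key extra ingredient. First I would write down the KKT system of $\mathrm{WDP}(t_k)$ at $w^k:=(x^k,y^k,z^k,u^k,v^k)$: there are nonnegative multipliers for the active parts of $g(x^k,y^k)\le 0$, $u^k\ge 0$, and $f(x^k,y^k)-L(x^k,z^k,u^k,v^k)\le t_k$, together with (sign-free) multipliers for $h(x^k,y^k)=0$ and $\nabla_z L(x^k,z^k,u^k,v^k)=0$, satisfying the stationarity equation together with the sign and complementarity conditions. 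Since the total number of constraints is finite, after passing to a subsequence I may assume that the index set $\mathcal A$ of constraints active at $w^k$ does not depend on $k$; by continuity every constraint in $\mathcal A$ is then active at the accumulation point $\bar w:=(\bar x,\bar y,\bar z,\bar u,\bar v)$, and complementarity annihilates the multipliers of the inactive constraints. Feasibility of $\bar w$ to (WDP) follows by passing to the limit in the constraints, using $t_k\downarrow 0$ in the inequality $f-L\le t_k$ so that $f(\bar x,\bar y)-L(\bar x,\bar z,\bar u,\bar v)\le 0$.

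The crux is to pass to the limit in the stationarity equation, which requires some control on the possibly unbounded multiplier sequence $\{\lambda^k\}$. Here I would invoke Carath\'eodory's lemma: for each $k$ there is a subset $\mathcal A_k\subseteq\mathcal A$ such that the gradients $\{\nabla c_i(w^k):i\in\mathcal A_k\}$ of the corresponding active constraints are linearly independent, the stationarity equation still holds with new multipliers $\tilde\lambda^k$ supported on $\mathcal A_k$, and the sign requirements on the inequality-constraint multipliers are preserved (the mixed conical/linear version of Carath\'eodory). Passing to a further subsequence I may take $\mathcal A_k\equiv\mathcal A_0$ for a fixed $\mathcal A_0\subseteq\mathcal A$. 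I then claim $\{\tilde\lambda^k\}$ is bounded. If not, I normalize by $\|\tilde\lambda^k\|$ and extract a nonzero limit $\bar\mu$ of $\tilde\lambda^k/\|\tilde\lambda^k\|$; dividing the stationarity equation by $\|\tilde\lambda^k\|$ and letting $k\to\infty$ (the gradient of $F$ converges, hence stays bounded) gives the nontrivial relation $\sum_{i\in\mathcal A_0}\bar\mu_i\nabla c_i(\bar w)=0$ with $\bar\mu_i\ge 0$ for the inequality indices, that is, $\{\nabla c_i(\bar w):i\in\mathcal A_0\}$ is positive-linearly dependent.

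Now the CPLD assumption at $\bar w$ applies: writing $\mathcal A_0$ as a pair $(I_0,J_0)$ of indices of active inequalities and of equalities at $\bar w$ (legitimate, since a constraint active at all $w^k$ is active at $\bar w$), CPLD furnishes a neighborhood of $\bar w$ on which $\{\nabla c_i:i\in\mathcal A_0\}$ is linearly dependent; as $w^k\to\bar w$, this contradicts the linear independence of $\{\nabla c_i(w^k):i\in\mathcal A_0\}$ built into the Carath\'eodory reduction, for $k$ large. Hence $\{\tilde\lambda^k\}$ is bounded; extracting a convergent subsequence $\tilde\lambda^k\to\bar\lambda$ and letting $k\to\infty$ in the stationarity equation yields $\nabla F(\bar x,\bar y)+\sum_{i\in\mathcal A_0}\bar\lambda_i\nabla c_i(\bar w)=0$ with $\bar\lambda_i\ge 0$ on the inequality indices, and padding with zero multipliers for all remaining constraints produces a full KKT system for (WDP) at $\bar w$; combined with the feasibility established in the first step, $\bar w$ is a KKT point of (WDP). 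I expect the main obstacle to be precisely the unbounded-multiplier case: one must carefully combine the Carath\'eodory reduction with CPLD, in particular checking that the reduced index set $\mathcal A_0$ is an admissible input to the CPLD definition at $\bar w$ and that the reduction preserves the sign of the conical-combination multipliers; everything else is a routine continuity-and-limit argument.
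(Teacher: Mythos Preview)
Your proposal is correct and follows essentially the same route as the paper's proof: the paper invokes \cite[Lemma~A.1]{Steffensen2010new} to obtain, at each $w^k$, KKT multipliers whose supporting gradients are linearly independent (this is precisely your Carath\'eodory reduction with preserved signs), and then runs the identical normalization-and-contradiction argument against CPLD to force boundedness of the multiplier sequence. The only cosmetic difference is that you pass to a subsequence to fix the reduced index set $\mathcal A_0$, whereas the paper instead uses the implication $\lambda_i^*>0\Rightarrow\lambda_i^k>0$ for large $k$ to embed the limiting support into the support at $w^k$; both devices yield the same contradiction with linear independence.
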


\begin{proof} For simplicity, we take the upper constraint $x\in X$ away from (WDP) and $\mathrm{WDP}(t)$. In addition, we denote
\begin{eqnarray*}
&&\bar{w}:=(\bar{x},\bar{y},\bar{z},\bar{u},\bar{v}), ~~~w^k:=(x^k,y^k,z^k,u^k,v^k), ~~~w:=(x,y,z,u,v),\\
&&F_0(w):=F(x,y), ~~~G_0(w):=f(x,y)-L(x, z, u, v), \\
&&G_i(w):=g_i(x,y) ~(1\le i\le p),  ~~~G_i(w):=-u_{i-p} ~(p+1\le i\le 2p),\\
&&H_i(w):=h_i(x,y) ~(1\le i\le q),  ~~H_i(w):=\nabla_{z_{i-q}}L(x,z,u,v) ~(q+1\le i\le q+m).
\end{eqnarray*}
Then, (WDP) and $\mathrm{WDP}(t_k)$ become
\begin{eqnarray}\label{app-WDP}
\min &&F_0(w) \nonumber \\
\mbox{\rm s.t.} && G_i(w) \leq 0, ~i=0,1,\cdots,2p,\\
&&H_i(w) =0, ~i=1,\cdots,q+m, \nonumber
\end{eqnarray}
and
\begin{eqnarray}\label{app-WDPt}
\min &&F_0(w) \nonumber \\
\mbox{\rm s.t.} && G_0(w) \leq t_k, \\
&& G_i(w) \leq 0, ~i=1,\cdots,2p,\nonumber\\
&&H_i(w) =0, ~i=1,\cdots,q+m. \nonumber
\end{eqnarray}
Obviously, $\bar{w}$ is a feasible point of (WDP) and, without loss of generality, we may assume that the whole sequence $\{w^k\}$ converges to $\bar{w}$.

Since $w^k$ is a KKT point of \eqref{app-WDPt}, it follows from \cite[Lemma A.1]{Steffensen2010new} that there exists $(\lambda_0^k,\lambda^k,\mu^k) \in \mathbb{R}\times\mathbb{R}^{2p}\times\mathbb{R}^{q+m}$ such that
\begin{eqnarray}
&&\nabla F_0(w^k)+\lambda_0^k\nabla G_0(w^k)+\nabla G(w^k)\lambda^k+\nabla H(w^k)\mu^k=0,
\nonumber\\
&&\lambda_0^k(G_0(w^k)-t_k)=0, ~~\lambda_0^k\geq0, \label{app-WDPt1}\\
&&G(w^k)^T\lambda^k=0, ~~\lambda^k\geq0,\nonumber
\end{eqnarray}
and the gradients
\begin{eqnarray}
\{\nabla G_i(w^k): 
\lambda_i^k>0, 0\le i\le 2p\}\cup\{\nabla H_i(w^k): \mu_i^k\not=0\} \label{independent}
\end{eqnarray}
are linearly independent. To show that $\bar{w}$ is a KKT point of \eqref{app-WDP}, it is sufficient to show the boundedness of the sequence of multipliers $\{\lambda_0^k,\lambda^k,\mu^k\}$.

Assume by contradiction that $\{\lambda_0^k,\lambda^k,\mu^k\}$ is unbounded. Taking a subsequence if necessary, we may assume
\begin{eqnarray}\label{0}
\lim_{k\to\infty}\frac{(\lambda_0^k,\lambda^k,\mu^k)}
{\|(\lambda_0^k,\lambda^k,\mu^k)\|} = (\lambda_0^*,\lambda^*,\mu^*). \end{eqnarray}
In particular, for every $k$ sufficiently large, we have
\begin{eqnarray}
\lambda_i^*>0 \ \Rightarrow \ \lambda_i^k>0, \quad \mu_i^*\not=0 \ \Rightarrow \ \mu_i^k\not= 0. \label{supp}
\end{eqnarray}
Dividing by $\|(\lambda_0^k,\lambda^k,\mu^k)\|$ in \eqref{app-WDPt1} and taking a limit, we have
\begin{eqnarray}\label{app-WDPt2}
&&\lambda_0^*\nabla G_0(\bar{w})+\sum_{\lambda_i^*>0}\lambda_i^*\nabla G_i(\bar{w})+\sum_{\mu_i^*\not=0}\mu_i^*\nabla H_i(\bar{w})=0, ~~\lambda_0^*\geq0.
\end{eqnarray}
Since $\|(\lambda_0^*,\lambda^*,\mu^*)\|=1$ by \eqref{0}, \eqref{app-WDPt2} implies that the gradients
\begin{eqnarray*}
\{\nabla G_i(\bar{w}): \lambda_i^*>0, 0\le i\le 2p\}\cup\{\nabla H_i(\bar{w}): \mu_i^*\not=0\}
\end{eqnarray*}
are positive-linearly dependent. Since problem \eqref{app-WDP} satisfies the CPLD at $\bar{w}$, the gradients
\begin{eqnarray*}
\{\nabla G_i(w): \lambda_i^*>0, 0\le i\le 2p\}\cup\{\nabla H_i(w): \mu_i^*\not=0\}
\end{eqnarray*}
are linearly dependent in a neighborhood. This, together with \eqref{supp}, contradicts the linear independence of the vectors in \eqref{independent}.
Consequently, $\{\lambda_0^k,\lambda^k,\mu^k\}$ is bounded.
This completes the proof.
\end{proof}

\section{Numerical experiments}
\label{numerical}

In this section, we report our numerical experience in applying the proposed approach to solve linear bilevel programs. In our experiments, we generated  tested problems randomly and  made a numerical comparison of the (WDP) approach with the popular MPEC approach by solving them respectively as nonlinear optimization problems and through relaxation methods. In particular, all experiments were implemented in MATLAB 9.8.0 and run on the same computer with Windows Server 2019 Standard, Intel(R) Xeon(R) Platinum 8268, CPU $@$ 2.90 GHz 2.89 GHz (4 processors) and 2.00 TB memory. 

\subsection{Test problems}

We designed a procedure to generate the following linear bilevel program:
\begin{eqnarray}
\min &&c_1^{T} x+c_2^{T} y\nonumber\\
\mbox{s.t.}&&A_1x\leq b_1,\label{linear-linear BP}\\
&&y\in \arg\min\limits_{y}\{d_2^{T} y: A_2x+B_2y \leq b_2,\ l_b\leq y\leq u_b\},\nonumber
\end{eqnarray}
where all coefficients $A_1\in \mathbb{R}^{p\times n}$, $A_2\in \mathbb{R}^{q\times n}$, $B_2\in \mathbb{R}^{q\times m}$, $b_1\in \mathbb{R}^p$, $b_2\in \mathbb{R}^q$, $c_1\in \mathbb{R}^n$, and $\{c_2,d_2\}\subset \mathbb{R}^m$ were generated sparsely by \emph{sprand} in $[0,1]$ with density to be 0.1, the lower and upper bounds were given by $l_b=-10*ones(m,1)$ and $u_b=10*ones(m,1)$ respectively. Since the lower-level program is linear, \eqref{linear-linear BP} can be equivalently transformed into the MPEC
\begin{eqnarray}
\min &&c_1^{T} x+c_2^{T} y \nonumber\\
\mbox{s.t.}&&A_1x \leq b_1, ~d_2+B_2^{T} u_1+u_2-u_3=0,\label{linear-linear MPEC}\\
&&0\leq u_1 \perp A_2x+B_2y- b_2\leq 0,\nonumber\\
&&0\leq u_2 \perp y-u_b\leq 0, ~0\leq u_3 \perp y-l_b\geq 0\nonumber
\end{eqnarray}
or (WDP)
\begin{eqnarray*}
\min &&c_1^{T} x+c_2^{T} y \nonumber\\
\mbox{s.t.}&&A_1x \leq b_1, ~d_2+B_2^{T} u_1+u_2-u_3=0,\label{linear-linear WDP} \\
&&d_2^{T} y\leq d_2^{T} z+u_1^{T}(A_2x+B_2z-b_2)+u_2^{T}(z-u_b)+u_3^{T}(l_b-z), \nonumber\\
&&u_1\geq0,~u_2\geq0,~u_3\geq0, ~A_2x+B_2y \leq b_2,~l_b\leq y\leq u_b.\nonumber
\end{eqnarray*}

In our experiments, we generated 20 problems, which contain two groups: Problems 1-10 were generated with $\{n=50, p=40, m=60, q=50\}$ and problems 11-20 were generated with $\{n=50, p=40, m=100, q=80\}$. We only adjusted the numbers $m$ and $q$ because the lower-level variables and constraints are the essential  factors in solving bilevel programs.

Besides, when evaluating the numerical results, we mainly focused on three aspects: the feasibility of the approximation solutions, the objective values, and the CPU times. In particular, 
we adopted the following unified criterion to measure the infeasibility for both MPEC and (WDP) in our tests: 
\begin{eqnarray*}
{\rm{Infeasibility}}&:=&\|\max(0,A_1x^k-b_1)\|+\|\max(0, A_2x^k+B_2y^k-b_2)\|\\
&&+\|\max(0,y^k-u_b)\|+\|\max(0,l_b-y^k)\|+|d_2^{T}y^k-h^*(x^k)|,
\end{eqnarray*}
where $(x^k,y^k,u^k)$ or $(x^k,y^k,z^k,u^k)$ is the current approximation solution to the MPEC or (WDP), and $h^*(x^k):=\min \{d_2^{T}y: A_2x^k+B_2y\leq b_2,~l_b\leq y\leq u_b\}$. The last term in the definition of Infeasibility is used to approximate the distance from $y^k$ to the solution set of the lower-level program, which is  difficult to calculate in practice.

\subsection{Solving WDP and MPEC as nonlinear programs }\label{directsolver}

\doublerulesep=0.4pt
\begin{table}
\caption{Numerical results by solving WDP and MPEC as nonlinear programs}
 \label{tab:as nonlinear (n=50; m=60; p=40; q=50)}%
 \begin{tabular}{cccccc}
\hline\hline\hline\noalign{\smallskip}{Problem}
& {Method} & {Upper-Level Objective Value}  & {Infeasibility}  & {CPU Time}\\

\noalign{\smallskip}\hline\hline\hline\noalign{\smallskip}{1}
& MPCC  & \textbf{-3.1463e+12} & 2.1900e-11          & 1.4554e+02 \\

& WDP  & -3.3923e+08                  & \textbf{5.6843e-14} & \textbf{6.4458e+01} \\

\noalign{\smallskip}\hline\noalign{\smallskip}{2}
& MPCC  & -2.5857e+09                   & \textbf{4.8005e-07} & \textbf{7.7537e+01} \\

& WDP  & \textbf{-3.0284e+13}  & 1.4604e+09          & 1.0761e+02 \\

\noalign{\smallskip}\hline\noalign{\smallskip}{3}
& MPCC  & -3.7596e+12                   & \textbf{9.0261e-05} & 2.4301e+02 \\

& WDP  & \textbf{-2.2363e+22}  & 3.2708e+10          & \textbf{2.3440e+02} \\

\noalign{\smallskip}\hline\noalign{\smallskip}{4}
& MPCC  & -1.0616e+12                    & \textbf{1.3316e+05} & \textbf{1.0887e+02} \\

& WDP  & \textbf{-2.9227e+21}  & 1.6209e+07          & 1.6791e+02 \\

\noalign{\smallskip}\hline\noalign{\smallskip}{5}
& MPCC  & -1.1837e+10                    & \textbf{1.7053e-13} & 4.9124e+02 \\

& WDP  & \textbf{-6.3853e+17}  & 8.1583e+05          & 9.1536e+01 \\

\noalign{\smallskip}\hline\noalign{\smallskip}{6}
& MPCC  & -2.5239e+05                    & 6.2759e+01          & 1.1706e+02 \\

& WDP  & \textbf{-1.5364e+08}  & \textbf{4.0963e-08} & \textbf{6.3872e+01} \\

\noalign{\smallskip}\hline\noalign{\smallskip}{7}
& MPCC  & \textbf{-1.0972e+10}           & 2.6241e-06          & 1.1103e+02 \\

& WDP  & -2.3061e+09          & \textbf{3.0972e-08} & \textbf{3.7846e+01} \\

\noalign{\smallskip}\hline\noalign{\smallskip}{8}
& MPCC  & \textbf{-1.1108e+10}  & 1.8574e-04          & \textbf{1.6481e+02} \\

& WDP  & -6.3271e+06                    & \textbf{7.1054e-15} & 4.6037e+02 \\

\noalign{\smallskip}\hline\noalign{\smallskip}{9}
& MPCC  & -3.5679e+11                    & 4.4173e+02          & 1.3203e+02 \\

& WDP  & \textbf{-3.8552e+12}  & \textbf{2.3625e-04} & \textbf{9.3630e+01} \\

\noalign{\smallskip}\hline\noalign{\smallskip}{10}
& MPCC  & -1.6421e+11                   & 5.1916e+04          & \textbf{9.1017e+01} \\

& WDP  & \textbf{-3.1135e+13}  & \textbf{3.1974e-04} & 1.2583e+02 \\

\noalign{\smallskip}\hline\hline\hline\noalign{\smallskip}{11}
& MPCC  & -8.8731e+09                    & \textbf{4.1478e-06} & \textbf{3.5987e+02} \\

& WDP  & \textbf{-4.9310e+12}  & 4.4921e-06          & 3.6791e+02 \\

\noalign{\smallskip}\hline\noalign{\smallskip}{12}
& MPCC  & -5.9250e+00            & 6.1676e+02          & \textbf{5.8209e+01} \\

& WDP  & \textbf{-1.3248e+10}            & \textbf{3.1468e+02} & 7.7642e+02 \\

\noalign{\smallskip}\hline\noalign{\smallskip}{13}
& MPCC  & -3.7345e+10                    & \textbf{1.8982e-06} & 6.2414e+02 \\

& WDP  & \textbf{-7.3841e+10}  & 2.2366e+02          & \textbf{4.4411e+02} \\

\noalign{\smallskip}\hline\noalign{\smallskip}{14}
& MPCC  & -4.9281e+00           & 3.3777e+02          & \textbf{6.2954e+01} \\

& WDP  & \textbf{-6.5233e+12}            & \textbf{1.4604e-01} & 3.2927e+02 \\

\noalign{\smallskip}\hline\noalign{\smallskip}{15}
& MPCC  & -6.4304e+08                    & \textbf{3.5476e-08} & 6.7769e+02 \\

& WDP  & \textbf{-3.3281e+10}  & 1.2171e+01          & \textbf{2.8304e+02} \\

\noalign{\smallskip}\hline\noalign{\smallskip}{16}
& MPCC  & -1.2443e+01                    & 3.5345e+02          & \textbf{1.1613e+02} \\

& WDP  & \textbf{-9.6905e+12}  & \textbf{1.0489e+02} & 3.1826e+02 \\

\noalign{\smallskip}\hline\noalign{\smallskip}{17}
& MPCC  & -7.2061e+09           & \textbf{8.1419e+00} & 8.5065e+02 \\

& WDP  & \textbf{-2.5564e+11}           & 5.9209e+02          & \textbf{4.0125e+02} \\

\noalign{\smallskip}\hline\noalign{\smallskip}{18}
& MPCC  & \textbf{-1.0965e+08}           & 7.0720e-08          & 3.1691e+02 \\

& WDP  & -4.3347e+07           & \textbf{3.3506e-08} & \textbf{1.4159e+02} \\

\noalign{\smallskip}\hline\noalign{\smallskip}{19}
& MPCC  & -4.2493e+00           & 5.9103e+02          & \textbf{5.2648e+01} \\

& WDP  & \textbf{-7.2757e+15}           & \textbf{1.9079e+02} & 7.5048e+02 \\

\noalign{\smallskip}\hline\noalign{\smallskip}{20}
& MPCC  & -2.1158e+08                    & \textbf{2.0894e-07} & 4.7101e+02 \\

& WDP  & \textbf{-5.2877e+10}  & 8.5212e-06          & \textbf{9.7612e+01} \\

\noalign{\smallskip}\hline\hline\hline
    \end{tabular}
\footnotesize{\\${}^a$ Problems 1-10 were generated with $n=50, p=40; m=60, q=50$.}
\footnotesize{\\${}^b$ Problems 11-20 were generated with $n=50, p=40; m=100, q=80$.}
\end{table}

In the first part of our experiments, we solved both MPEC and (WDP) directly by invoking the SQP algorithm of the solver \emph{fmincon}. We chose all starting points to be the origins and selected the required parameters, together with the termination criterion, to default settings as set in the package. See Table \ref{tab:as nonlinear (n=50; m=60; p=40; q=50)} for the numerical results on the tested problems.

There are two aspects different from what we had predicted before the experiments. The first is related to the feasibility of the approximation solutions. Since all lower-level programs are equivalent to their KKT conditions for the tested problems, the MPEC approach should have greater advantages in feasibility than (WDP), but the numerical results indicate that this is not always the case, at least in our tests. In fact, from Table \ref{tab:as nonlinear (n=50; m=60; p=40; q=50)}, we can see that, if we set the tolerance for infeasibility to be $10^{-4}$, there were 10 and 8 satisfactory cases among 20 problems for MPEC and (WDP) respectively.

The second aspect is that the efficiency of solving linear (BP) through the (WDP) approach directly was lower than we had expected before the experiments. From Table \ref{tab:as nonlinear (n=50; m=60; p=40; q=50)}, among the five cases where the infeasibility of both was lower than $10^{-4}$ (that is, problems 1, 7, 11, 18, 20), there were only two cases where the (WDP) approach had advantages over the MPEC approach in the objective values. This may be caused by the increase of dimension of (WDP). On the other hand, it reminds us that (WDP) is still difficult to solve directly in many cases, although it may satisfy the MFCQ at its feasible points.

Delightfully, we observed that, among the five cases mentioned above, the (WDP) approach had greater advantages than the MPEC approach in the CPU times. Our experiments on relaxation methods given in the next subsection show that the (WDP) approach had overall advantages over the MPEC approach in both objective values and CPU times in our tests.

\subsection{Solving (WDP) and MPEC by relaxation methods}

\begin{table}
  \caption{Numerical results for WDP and MPEC by relaxation methods}
\label{tab:relaxation method (n=50; m=60; p=40; q=50)}%

\begin{tabular}{cccccc}
\hline\hline\hline\noalign{\smallskip}{Problem}
& {Method} & {Upper-Level Objective Value}  & {Infeasibility}  & {CPU Time}\\

\noalign{\smallskip}\hline\hline\hline\noalign{\smallskip}{1}
& MPCC  & -3.6778e+22                   & 5.6843e-14          & \textbf{4.8954e+02} \\

& WDP  & \textbf{-1.5743e+24}  & 5.6843e-14          & 1.6175e+03          \\

\noalign{\smallskip}\hline\noalign{\smallskip}{2}
& MPCC  & -2.1392e+11                    & 0.0000e+00          & 9.5534e+02         \\

& WDP  & \textbf{-1.0887e+18}  & 0.0000e+00          & \textbf{8.8564e+02} \\

\noalign{\smallskip}\hline\noalign{\smallskip}{3}
& MPCC  & \textbf{-2.9363e+11}  & 6.8764e-08          & 1.8591e+02           \\

& WDP  & -5.8726e+10                   & \textbf{2.8422e-14} & \textbf{1.0515e+02}  \\

\noalign{\smallskip}\hline\noalign{\smallskip}{4}
& MPCC  & -2.5200e+08           & \textbf{0.0000e+00} & \textbf{1.8158e+02} \\

& WDP  & \textbf{-1.7895e+10}           & 2.7830e-11          & 1.1953e+03        \\

\noalign{\smallskip}\hline\noalign{\smallskip}{5}
& MPCC  & -5.2419e+10                   & 1.1377e-11          & 1.2217e+03         \\

& WDP  & \textbf{-2.6210e+11}  & \textbf{3.3307e-16} & \textbf{3.9862e+02}  \\

\noalign{\smallskip}\hline\noalign{\smallskip}{6}
& MPCC  & -3.3349e+10                   & 1.6620e-11          & \textbf{1.0940e+02}  \\
& WDP  & \textbf{-1.7702e+15}  & \textbf{5.6843e-14} & 1.1025e+02           \\

\noalign{\smallskip}\hline\noalign{\smallskip}{7}
& MPCC  & -1.1298e+10                  & 5.6843e-14          & 9.9915e+02          \\

& WDP  & \textbf{-3.5136e+16}  & 5.6843e-14          & \textbf{8.3334e+02} \\

\noalign{\smallskip}\hline\noalign{\smallskip}{8}
& MPCC  & \textbf{-1.4392e+16} & \textbf{0.0000e+00} & 3.4346e+03          \\

& WDP  & -3.3058e+10                 & 1.1559e-11          & \textbf{1.4444e+03} \\

\noalign{\smallskip}\hline\noalign{\smallskip}{9}
& MPCC  & -7.0267e+09                   & 6.8374e-07          & 1.0926e+03        \\

& WDP  & \textbf{-2.3466e+15} & \textbf{5.6843e-14} & \textbf{1.1597e+02} \\

\noalign{\smallskip}\hline\noalign{\smallskip}{10}
& MPCC  & \textbf{-3.4852e+17} & 2.8422e-14          & 3.0776e+03         \\

& WDP  & -2.3240e+17                   & 2.8422e-14          & \textbf{8.2931e+01}  \\

\noalign{\smallskip}\hline\hline\hline\noalign{\smallskip}{11}
& MPCC  & -5.5652e+10                   & 1.8271e-12          & 1.2597e+03        \\

& WDP  & \textbf{-1.2063e+11}  & \textbf{0.0000e+00} & \textbf{7.7513e+02}\\

\noalign{\smallskip}\hline\noalign{\smallskip}{12}
& MPCC  & -6.6887e+10          & 9.2383e-07          & \textbf{1.4390e+03} \\

& WDP  & \textbf{-2.7658e+11}           & \textbf{2.2737e-13} & 2.7867e+03          \\

\noalign{\smallskip}\hline\noalign{\smallskip}{13}
& MPCC  & -1.0222e+10                   & 9.9172e-07          & 2.0430e+03         \\

& WDP  & \textbf{-5.1128e+10} & \textbf{1.3984e-12} & \textbf{7.0663e+02}  \\

\noalign{\smallskip}\hline\noalign{\smallskip}{14}
& MPCC  & -1.6065e+10                   & 1.7830e-12          & \textbf{2.3482e+03} \\
& WDP  & \textbf{-7.3393e+11}  & \textbf{5.6843e-14} & 2.5877e+03            \\

\noalign{\smallskip}\hline\noalign{\smallskip}{15}
& MPCC  & -1.5798e+11                   & \textbf{0.0000e+00} & 7.3459e+03           \\

& WDP  & \textbf{-3.9495e+12}  & 2.1346e-32          & \textbf{5.8045e+02}  \\

\noalign{\smallskip}\hline\noalign{\smallskip}{16}
& MPCC  & \textbf{-1.0787e+11}          & 1.2280e-10          & 1.9163e+03         \\

& WDP  & -3.3889e+08          & \textbf{5.6843e-14} & \textbf{3.1293e+02}  \\

\noalign{\smallskip}\hline\noalign{\smallskip}{17}
& MPCC  & \textbf{-2.5465e+19}  & \textbf{0.0000e+00} & \textbf{2.4133e+03}  \\
& WDP  & -6.3969e+10                   & 4.7536e-07          & 6.6342e+03         \\

\noalign{\smallskip}\hline\noalign{\smallskip}{18}
& MPCC  & -2.7215e+10                   & 7.4023e-13          & \textbf{1.0163e+03}  \\

& WDP  & \textbf{-3.2015e+11} & \textbf{5.6843e-14} & 2.1804e+03         \\

\noalign{\smallskip}\hline\noalign{\smallskip}{19}
& MPCC  & \textbf{-1.4700e+10}           & 9.2752e-13          & 1.3563e+03          \\

& WDP  & -5.6359e+07          & \textbf{1.1369e-13} & \textbf{9.1482e+02} \\

\noalign{\smallskip}\hline\noalign{\smallskip}{20}
& MPCC  & \textbf{-7.6390e+10}  & 4.2411e-07          & \textbf{4.5808e+02} \\

& WDP  & -1.5278e+10                    & \textbf{3.1483e-09} & 2.2327e+03        \\

\noalign{\smallskip}\hline\hline\hline
    \end{tabular}

\end{table}

In this subsection, we report our numerical results in solving (WDP) and MPEC by relaxation methods. The relaxation method for (WDP) has been studied in Section 5, while the relaxation scheme for MPEC employed here follows from the work \cite{Scholtes2001}, by using 
\begin{eqnarray*}
\mathrm{MPEC}(t)\quad\min &&c_1^{T} x+c_2^{T} y \nonumber\\
\mbox{s.t.}&&A_1x \leq b_1,~d_2+B_2^{T} u_1+u_2-u_3=0,\nonumber\\
&&u_1\geq0,~u_2\geq0,~u_3\geq0,~A_2x+B_2y \leq b_2,~l_b\leq y\leq u_b,\nonumber\\
&&u_1^{T}(A_2x+B_2y-b_2)+u_2^{T}(y-u_b)+u_3^{T}(l_b-y)\leq t\nonumber
\end{eqnarray*}
to approximate the MPEC \eqref{linear-linear MPEC}, 
where $t>0$ is an approximation parameter.

We implemented Algorithm \ref{alg:WDP-Relaxed} given below to solve the tested problems. Note that, for the linear bilevel programs, the feasibility of $\tilde{w}^k$ to (WDP) or MPEC is evident.

\begin{algorithm}
  \caption{}
  \label{alg:WDP-Relaxed}

\textbf{Step 0} Choose $\tilde{x}^0$ such that $A_1\tilde{x}^0 \leq b_1$, an initial parameter $t_0>0$, a parameter $\sigma\in(0,1)$, and some accuracy tolerances $\{\epsilon_{\rm p},\epsilon_{\rm r},\delta_{\min}\}$. Set $k:=0$.

\textbf{Step 1} Solve the lower-level program in \eqref{linear-linear BP} with $x=\tilde{x}^k$ by \emph{linprog} to get $\tilde{y}^k$ and $\tilde{u}^k$. Set $\tilde{w}^k:=(\tilde{x}^k,\tilde{y}^k,\tilde{y}^k,\tilde{u}^k)$ in the case of (WDP) and $\tilde{w}^k:=(\tilde{x}^k,\tilde{y}^k,\tilde{u}^k)$ in the case of MPEC. If $\tilde{w}^k$ satisfies a termination criterion with $\epsilon_{\rm p}$ as the accuracy tolerance, stop. Otherwise, go to Step 2.

\textbf{Step 2} Solve WDP$(t_k)$ or MPEC$(t_k)$ by \emph{fmincon} with $\tilde{w}^k$ as the starting point and $\epsilon_{\rm r}$ as the accuracy tolerance to obtain an iterative $w^k:=(x^k,y^k,z^k,u^k)$ or $w^k:=(x^k,y^k,u^k)$. If $w^k$ satisfies the above termination criterion, 
stop. Otherwise, go to Step 3.

\textbf{Step 3}
Set $\tilde{x}^{k+1}:=\tilde{x}^k$ and $t_{k+1}:=\max\{\sigma t_k, \delta_{\min}\}$. Let $k:=k+1$ and go to Step 1.
\end{algorithm}

In our experiments, we chose $\tilde{x}^0$ to be the origins and set $t_0=1$, $\sigma=10^{-1}$, $\epsilon_{\rm p}=10^{-8}$, and $\epsilon_{\rm r}=\delta_{\min}=10^{-16}$. The termination criteria for both (WDP) and MPEC were defined as the residuals of some kinds of optimality conditions. See Table \ref{tab:relaxation method (n=50; m=60; p=40; q=50)} for the numerical results on the tested problems.

From Table \ref{tab:relaxation method (n=50; m=60; p=40; q=50)}, we can see that there were 18 and 15 cases with sufficient feasibility for the (WDP) and MPEC approaches respectively. Among the 14 cases with sufficient feasibility for both, the (WDP) approach had overall advantages over the MPEC approach in both objective values and CPU times. In particular, compared with Table \ref{tab:as nonlinear (n=50; m=60; p=40; q=50)}, the CPU times did not increase significantly. This indicates that, at least in our tests, the relaxation method based on the (WDP) reformulation was quite efficient.

\section{Conclusions}
\label{conclu}

We have shown that, under mild conditions such as pseudoconvexity, the bilevel program (BP) is equivalent to the new single-level optimization problem (WDP) in the globally or locally optimal sense. Unlike the existing single-level reformulations (SP), (VP), and (LDP), (WDP) does not involve any function without analytic expression. Compared with another existing single-level reformulation MPEC, we have shown by an example that (WDP) may satisfy the MFCQ at its feasible points, unlike the MPEC that fails to satisfy the MFCQ at any feasible point, and hence (WDP) is relatively easy to deal with. Based on our realization that (WDP) is still difficult to solve directly in some cases, we have proposed a relaxation method for solving (WDP) and established a convergence analysis. Comprehensive numerical experiments indicate that, although solving (WDP) directly did not perform very well in our tests, the relaxation method based on the (WDP) reformulation was quite efficient. As future work, we will be devoted to analyzing the (WDP) reformulation more deeply from theoretical and numerical aspects so as to develop more efficient algorithms for the bilevel program (BP).


\end{document}